\documentclass{amsart}

\usepackage{amsfonts,amsmath,amssymb,amsthm}
\usepackage{mathrsfs,mathtools,stmaryrd,wasysym}
\usepackage{enumerate}
\usepackage{xspace} 

\usepackage{esint}
\usepackage{hyperref}
\usepackage{xcolor}
\DeclareMathAlphabet{\mathpzc}{OT1}{pzc}{m}{it}

\newcommand{\TheTitle}{Self-improving properties for a class of elliptic and parabolic equations on bounded domains}
\newcommand{\ShortTitle}{Self-improving properties for linear and semi-linear equations}

\theoremstyle{plain}
\newtheorem{theorem}{Theorem}
\newtheorem{proposition}[theorem]{Proposition}
\newtheorem{lemma}[theorem]{Lemma}
\newtheorem{corollary}[theorem]{Corollary}

\theoremstyle{definition}

\newtheorem{remark}[theorem]{Remark}

\newcommand{\R}{\mathbb{R}}
\newcommand{\polH}{\mathbb{H}}
\newcommand{\calL}{\mathcal{L}}
\newcommand{\calD}{\mathcal{D}}
\newcommand{\calB}{\mathcal{B}}
\newcommand{\bv}{\mathbf{v}}
\newcommand{\GRAD}{\nabla}
\DeclareMathOperator{\DIV}{div}
\newcommand{\diff}{\, \mbox{\rm d}}
\newcommand{\ie}{i.e.,\@\xspace}

\newcommand{\bF}{{\mathbf{F}}}
\newcommand{\bL}{{\mathbf{L}}}
\newcommand{\bW}{{\mathbf{W}}}

\newcommand{\bef}{{\mathbf{f}}}
\newcommand{\beps}{{\boldsymbol{\varepsilon}}}
\newcommand{\bw}{{\mathbf{w}}}
\newcommand{\bu}{{\mathbf{u}}}

\DeclareMathOperator{\supp}{supp}
\DeclareMathOperator*{\essinf}{essinf}

\newcommand{\calP}{{\mathcal{P}}}

\newcommand{\mae}{~a.e.~}
\newcommand{\vp}{{\mathrm{v.p.}}}

\begin{document}

\title[\ShortTitle]{\TheTitle}

\author{Tadele Mengesha}
\email[(T.~Mengesha)]{mengesha@utk.edu}

\author{Abner J.~Salgado}
\email[(A.J.~Salgado)]{asalgad1@utk.edu}

\address{Department of Mathematics, University of Tennessee, Knoxville, TN 37996, USA}

\subjclass[2020]{35D30,   
35D10,    
35R11,    
46B70.    
}

\keywords{fractional differentiability; nonlocal equations; Lipschitz domains; self-improvement; interpolation spaces.}

\begin{abstract}
  We discuss self improving properties of some local and nonlocal, elliptic and parabolic, equations on bounded domains. We employ a functional analytic approach wherein the solution space sits in a suitable interpolation scale. Utilizing a classical analytic perturbation result, we extrapolate the invertibility of the main operator from the base space to nearby spaces within the interpolation family.
\end{abstract}

\maketitle

\begin{center}
  \emph{We dedicate this work to the memory of our colleague and friend Tim Schulze.}
\end{center}

\section{Introduction}
\label{sec:intro}

The purpose of this work is to discuss the so-called self-improving properties of solutions to elliptic and parabolic, local and nonlocal, linear and semi-linear, problems on bounded domains. These properties are characterized by solutions exhibiting improved spatial gradient integrability for local elliptic and parabolic equations, \cite{MR159110,MR417568,Kinnunen2000}, as well as improved integrability and spatial fractional differentiability for nonlocal elliptic and parabolic equations, \cite{MR3336922,MR3907738,Mengesha-Scott-FKorn}. These hold under minimal assumptions on the problem data. Self-improving properties are also known for double phase problems \cite{MR4669309,MR4507377}, nonlocal porous medium equations \cite{MR4019094,MR4149740}, and for problems with nonstandard growth \cite{Bogelein2011, ERHARDT20161772}, to cite a few extensions.

In the literature, self-improvement properties for solutions to nonlocal equations typically assume that the problem is posed on the entire space $\R^d$, $d \geq 1$. Consequently, the resulting estimates involve either localized norms or global norms defined over all of $\R^d$. In applications, however, one frequently encounters problems formulated on bounded Lipschitz domains. This was the case in our recent work \cite{Mengesha2025} where having a global result, on a bounded Lipschitz domain, proved crucial. In the absence of citable literature, we presented a brief sketch in \cite[Appendix B]{Mengesha2025} detailing how such global result can be obtained. It is our goal here to expand on \cite[Appendix B]{Mengesha2025} to obtain self-improving properties for local and nonlocal, elliptic and parabolic, problems over bounded Lipschitz domains. We also present some results, mostly of perturbation-type, regarding semi-linear problems.

The standard approach to prove higher regularity results rely on the classical strategy used for second-order partial order differential equations, which utilizes Caccioppoli inequalities to derive reverse H\"older estimates, followed by a Gehring's lemma-type result to obtain improved integrability. This approach is robust and is implemented for various problems (linear, nonlinear, nonlocal) studied in most of the above literature cited above.  

In contrast, reference \cite{MR3907738} presents a functional analytic approach for linear nonlocal elliptic and parabolic equations that is different from the classical approach. The strategy, applied to nonlocal equations posed over $\R^d$, relies on the so-called \emph{Shneiberg's qualitative analytic perturbation lemma}, its extensions and variations. Our current contribution starts with the observation that one can adapt the strategy in  \cite{MR3907738} to problems that are posed over bounded Lipschitz domains. To that end, we begin by stating Shneiberg's result in a form that is suitable for our purposes.

\begin{lemma}[Shneiberg]
\label{lem:Shneiberg}
  Let $\bar{X} = (X_0,X_1)$ and $\bar{Y} = (Y_0,Y_1)$ be interpolation couples of Banach spaces and $T : X_j \to Y_j$ be a bounded linear operator ($j = 0,1$).
   \begin{enumerate}[1.]
     \item Complex interpolation: Let, for $\theta \in (0,1)$, denote by $\bar{X}_\theta \coloneqq [X_0,X_1]_\theta$ the scale of complex interpolation spaces between $\bar{X}$. The symbol $\bar{Y}_\theta$ carries a similar meaning. Assume that there is $\alpha \in (0,1)$ such that $T^{-1}$ exists and it is a bounded operator from $\bar{Y}_\alpha$ to $\bar{X}_\alpha$. Then, there is $\delta>0$ such that, if
     \[
       |\theta - \alpha |<\delta,
     \]
     then $T^{-1}$ exists and it is a bounded operator from $\bar{Y}_\theta$ to $\bar{X}_\theta$. The operator norm depends only on $\| T^{-1} \|_\alpha$ and $\delta$.

     \item Real interpolation: Let, for $\theta \in (0,1)$ and $p \in [1,\infty)$, denote by $\bar{X}_{\theta,p}$ the scale of real interpolation spaces between $\bar{X}$. $\bar{Y}_{\theta,p}$ is defined similarly. Assume that there are $\alpha \in (0,1)$ and $q \in [1,\infty)$ such that $T^{-1}$ exists and it is a bounded operator from $\bar{Y}_{\alpha,q}$ to $\bar{X}_{\alpha,q}$. Then, there is $\delta>0$ such that, if
     \[
       |\theta - \alpha |<\delta,
     \]
     then $T^{-1}$ exists and it is a bounded operator from $\bar{Y}_{\theta,q}$ to $\bar{X}_{\theta,q}$. The operator norm depends only on $\| T^{-1} \|_{\alpha,q}$ and $\delta$.
   \end{enumerate}
\end{lemma}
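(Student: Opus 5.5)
The idea is to reduce both parts to one quantitative estimate: a lower bound $\|x\|_{\bar X_\theta}\le C\|Tx\|_{\bar Y_\theta}$ that holds uniformly for $\theta$ near $\alpha$. Invertibility then follows from it by a continuity argument.

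\emph{Step 1 (a priori estimate propagates).} Let $M=\|T\|_{X_0\to Y_0}+\|T\|_{X_1\to Y_1}$ and $\kappa=\|T^{-1}\|_\alpha$, so that $\|x\|_{\bar X_\alpha}\le \kappa\|Tx\|_{\bar Y_\alpha}$. The goal is the following: there are $\delta>0$ and $C$, depending only on $M,\kappa$, such that for $|\theta-\alpha|<\delta$,
\[
  \|x\|_{\bar X_\theta}\le C\|Tx\|_{\bar Y_\theta}\quad\text{for all } x\in \bar X_\theta.
\]
In the complex case I would use the Calder\'on construction. Given $x\in\bar X_\theta$ with norm close to $1$, pick an analytic $f$ on the strip $S=\{0<\operatorname{Re}z<1\}$ with $f(\theta)=x$ and $\|f\|_{\mathcal F}\le 2\|x\|_\theta$. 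Then $g=Tf$ is admissible for $\bar Y$. The key ingredient is a ``derivative'' bound. Compose with a conformal map of $S$ onto the disc sending $\theta\mapsto 0$, and apply Cauchy/Schwarz-type estimates. These give that the map $x\mapsto f'(\theta)$ is controlled, in the sense that $\|x-f(\alpha)\|$ is bounded by a constant times $|\theta-\alpha|\,\|x\|_\theta$, measured in a suitable intermediate norm. The precise mechanism is the standard identity
\[
  \bar X_\alpha=\{f(\alpha):f\in\mathcal F,\ f(\theta)=0\}+\dots
\]
as used by Shneiberg and Kalton--Mitrea. One shows that if $f(\theta)=0$ then $\|f(\alpha)\|_\alpha\le c\,\rho(\theta,\alpha)\|f\|_{\mathcal F}$, where $\rho$ is the pseudo-hyperbolic distance, and that this is small for $\theta$ near $\alpha$.

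Using this, one writes $x=f(\theta)$ and compares with $f(\alpha)$, applying the $\alpha$-estimate to $f(\alpha)$. A bootstrap then absorbs the error: the quantity $\|x\|_\theta$ appears on both sides with a factor $c\kappa M\rho(\theta,\alpha)<1/2$, which yields the claim. In the real case the same scheme runs through the $K$-functional or the discrete $J$-method. There one uses that $\|\cdot\|_{\theta,q}$ with the weight $2^{-\theta k}$ varies continuously in $\theta$ on finitely supported sequences. Alternatively, one can invoke Kalton--Mitrea's general framework, which treats both methods via their abstract ``analytic family'' property.

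\emph{Step 2 (surjectivity by the method of continuity).} Once the estimate is known on an interval $I=(\alpha-\delta,\alpha+\delta)$, $T:\bar X_\theta\to\bar Y_\theta$ is injective with closed range for each $\theta\in I$. Since $\bar X_0$-data are dense, the index-theoretic argument applies: the Fredholm index is locally constant along the interpolation scale (this is again the Kalton--Mitrea/Shneiberg observation). So $T$ is onto at $\theta$ because it is onto at $\alpha$. Concretely, the set of $\theta\in I$ where $T$ is onto is open by the stability estimate and the perturbation lemma. It is also closed because $C$ is uniform in $\theta$. Since it contains $\alpha$, it is all of $I$. The norm bound $\|T^{-1}\|_\theta\le C$ then depends only on $\kappa$ and $\delta$ (and $M$, implicit in $\delta$).

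The main obstacle is Step 1, specifically the estimate $\|f(\alpha)\|_\alpha\lesssim\rho(\theta,\alpha)\|f\|_{\mathcal F}$ when $f(\theta)=0$. It needs care in the real method, where there is no analytic function space. There I would first try to pass through the complex method via the reiteration/equivalence $[\bar X_{\theta_0,q},\bar X_{\theta_1,q}]_\eta=\bar X_{\theta,q}$, or else follow Kalton--Mitrea's unified argument.
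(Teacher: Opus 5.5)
Your Step 2 (surjectivity) does not work as written, and surjectivity is half of the lemma. The method of continuity is not available here. $T$ is a single operator acting between spaces $\bar X_\theta\to\bar Y_\theta$ that change with $\theta$, so there is no norm-continuous family of operators between fixed Banach spaces. For the same reason, the usual perturbation lemma (a small perturbation of an invertible operator is invertible) says nothing about passing from one $\theta$ to another. The assertion that the set of $\theta\in I$ where $T$ is onto is open is precisely the statement being proved. Closedness ``because $C$ is uniform'' would require transferring solutions from $\bar X_{\theta_n}$ to $\bar X_\theta$, which nothing in your argument provides; a uniform lower bound alone gives only injectivity and closed range. Invoking local constancy of the (semi-)Fredholm index along the scale means citing a theorem that already contains the lemma. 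If that is allowed, Step 1 is superfluous and you are in the same position as the paper, whose proof consists only of references (Shneiberg; Auscher--Bortz--Egert--Saari, Theorem A.1, for the complex case; Zafran, Theorem 2.8, for the real case).

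The missing idea is short and uses only your vanishing lemma with the roles of $\theta$ and $\alpha$ exchanged; the pseudo-hyperbolic distance $\rho=\rho(\theta,\alpha)$ is symmetric.
\begin{enumerate}[1.]
\item Given $y\in\bar Y_\theta$, choose $g\in\mathcal{F}(\bar Y)$ with $g(\theta)=y$ and $\|g\|_{\mathcal F}\le 2\|y\|_\theta$.
\item Put $u=T^{-1}g(\alpha)\in\bar X_\alpha$, so $\|u\|_\alpha\le 2\kappa\|y\|_\theta$, and lift $u$ to $f\in\mathcal{F}(\bar X)$ with $f(\alpha)=u$ and $\|f\|_{\mathcal F}\le 4\kappa\|y\|_\theta$.
\item Since $Tf-g$ vanishes at $\alpha$, you get $\|y-Tf(\theta)\|_\theta\le\rho\,(4\kappa M+2)\|y\|_\theta$ and $\|f(\theta)\|_\theta\le 4\kappa\|y\|_\theta$.
\item If $\rho(4\kappa M+2)\le\tfrac12$, iterating on the residual (a Neumann series) produces $x\in\bar X_\theta$ with $Tx=y$ and $\|x\|_\theta\le 8\kappa\|y\|_\theta$.
\end{enumerate}
No index theory or connectedness argument is needed.

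The same re-lifting trick is what your Step 1 actually requires, in place of the garbled ``identity'' and the comparison of $x$ with $f(\alpha)$ ``in an intermediate norm.'' Lift $f(\alpha)$ to a near-optimal $h$ with $h(\alpha)=f(\alpha)$ and apply the lemma to $f-h$. This gives $\|f(\theta)\|_\theta\le(1+\rho)\|f(\alpha)\|_\alpha+\rho\|f\|_{\mathcal F}$, and the analogous estimate for $Tf$ from $\theta$ to $\alpha$ closes your bootstrap.

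For the real method, continuity of the weighted norms on finitely supported sequences gives no uniform control. Your fallback, however, is sound. For any Banach couple and $q<\infty$ one has $[\bar X_{\theta_0,q},\bar X_{\theta_1,q}]_\eta=\bar X_{(1-\eta)\theta_0+\eta\theta_1,q}$ with equivalent norms, and likewise for $\bar Y$. So the real case follows from the complex one. This is a different route from Zafran's, at the price of constants that also depend on $\theta_0,\theta_1,q$.
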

\begin{proof}
  See \cite{MR634681}, or \cite[Theorem A.1]{MR3907738} for the complex interpolation case; and \cite[Theorem 2.8]{MR569253} for the real interpolation case.
\end{proof} 
In the following sections, we select function spaces that will serve as solution spaces to the boundary value problems we study. We then characterize the spaces as interpolation spaces, and ensure the valid application of the above perturbation result.  Our presentation is organized as follows. Notation and preliminary results are discussed in Section~\ref{sec:Prelims}. In particular, we discuss the interpolation of Sobolev-Bochner spaces over Lipschitz domains. Nonlocal elliptic equations, both linear and semi-linear, and their self-improvement properties are discussed in Section~\ref{sec:EllipticEquations}. We next move to elliptic systems in Section~\ref{sec:EllipticSystems}. Finally, nonlocal parabolic equations are discussed in Section~\ref{sec:Parabolic}.

\section{Preliminaries}
\label{sec:Prelims}

Let us begin by stating some needed results. All our vector spaces are over the real numbers, and we shall adhere to standard notation with regards to function spaces. In what follows $\Omega \subset \R^d$, $d \geq 1$, is a bounded Lipschitz domain and, when dealing with time dependent problems $T>0$ is a positive final time. We define the space-time cylinders
\[
  Q_T \coloneqq (0,T) \times \Omega, \qquad Q_\R \coloneqq \R \times \Omega.
\]
As usual, for $p \in [1,\infty]$, we denote by $p'$ its H\"older conjugate.

We refer the reader to \cite{MR482275,MR3753604} for definitions and results about interpolation, real and complex, of Banach spaces. For example, for a given $\alpha\in (0, 1)$ and $1<p<\infty$, the fractional Sobolev space $W^{\alpha, p}(\mathbb{R}^{d})$ coincides with the Besov class $B^{\alpha}_{p\,p}(\mathbb{R}^{d})$ and consists of functions in $L^{p}(\mathbb{R}^{d})$ with finite Gagliardo semi-norm. Following the notation in \cite{MR1951822}, the function space $ \widetilde{W}^{\alpha,p}(\Omega)$ is defined as 
\[
  \widetilde{W}^{\alpha,p}(\Omega) \coloneqq \left\{ u \in W^{\alpha,p}(\mathbb{R}^{d}): \supp(u)\subset \overline{\Omega} \right\}. 
\]
In addition, we shall need some interpolation results regarding function spaces on Lipschitz domains. In what follows, we let $p_0, p_1 \in (1,\infty)$, $\alpha_0,\alpha_1 \in (0,1)$, and, for $\theta \in (0,1)$, we define
\[
  \frac1p = \frac{1-\theta}{p_0} + \frac{\theta}{p_1}, \qquad \alpha = (1-\theta)\alpha_0 + \theta \alpha_1.
\]

\begin{enumerate}[$\bullet$]
  \item Complex interpolation of Lebesgue spaces: For $\theta \in (0,1)$ we have
  \begin{equation}
  \label{eq:LpComplex}
    [L^{p_0}(\Omega), L^{p_1}(\Omega) ]_\theta = L^p(\Omega) .
  \end{equation}

  \item Complex interpolation of fractional Sobolev spaces (\cite{MR1951822}):
  \begin{equation}
  \label{eq:WapComplex}
    [\widetilde{W}^{\alpha_0,p_0}(\Omega), \widetilde{W}^{\alpha_1,p_1}(\Omega) ]_\theta = \widetilde{W}^{\alpha,p}(\Omega).
  \end{equation}
  
  \item Real interpolation of fractional Sobolev spaces (\cite{MR1951822}):
  \begin{equation}
  \label{eq:Wa2Real}
    ( \widetilde{W}^{\alpha_0,2}(\Omega), \widetilde{W}^{\alpha_1,2}(\Omega) )_{\theta,2} = \widetilde{W}^{\alpha,2}(\Omega).
  \end{equation}

  \item Real interpolation of Bochner spaces (\cite{MR358326}): Let $\bar{X} = (X_0,X_1)$ be an interpolation couple. Then
  \begin{equation}
  \label{eq:Cwikel}
    \left( L^{p_0}(0,T;X_0), L^{p_1}(0,T;X_1) \right)_{\theta,p} = L^p\left(0,T; (X_0,X_1)_{\theta,p} \right).
  \end{equation}
\end{enumerate}

\subsection{Sobolev-Bochner spaces on Lipschitz domains}
\label{sec:Stuff}

To deal with parabolic problems we need to characterize the real interpolation of Sobolev-Bochner spaces. We begin by establishing some notation. Let, for $\sigma \in \R$, $(-\Delta)^\sigma$ denote the $\sigma$-th root of the Dirichlet Laplacian on $\Omega$, in the sense of spectral theory. We define
\[
  \polH^\sigma(\Omega) \coloneqq \calD\left( (-\Delta)^{\sigma/2} \right).
\]
Note that $L^2(\Omega) = \polH^0(\Omega)$ and recall, see \cite[Theorem 4.36]{MR3753604}, that these spaces form a real interpolation scale, \ie
\[
  \polH^{\sigma}(\Omega) = \left( \polH^{\sigma_1}(\Omega), \polH^{\sigma_2}(\Omega)\right)_{\theta,2}, \qquad \sigma = (1-\theta)\sigma_1 + \theta \sigma_2.
\]

We now follow \cite{MR430814} and \cite[\S1.13.3]{MR503903} and let
\[
  A \coloneqq L^2(Q_\R) = L^2(\R;L^2(\Omega)).
\]
On this space we define two semigroups:
\begin{enumerate}[$\bullet$]
  \item The \emph{semigroup of translations in time} $\{G_1(\tau)\}_{\tau \geq 0}$. For $w \in L^2(Q_\R)$
  \[
    G_1(\tau) w(t,x) = w(t+\tau,x).
  \]
  Its infinitesimal generator is the time derivative $\Lambda_1 \coloneqq \diff_t$, and its domain is
  \[
    \calD(\Lambda_1) \coloneqq \left\{ w \in L^2(Q_\R) : \diff_tw \in L^2(Q_\R) \right\} = W^{1,2}(\R; L^2(\Omega)).
  \]

  \item Let $\sigma \in (0,1]$ and $\Lambda_2 \coloneqq (-\Delta)^\sigma$ with domain
  \[
    \calD(\Lambda_2) \coloneqq \left\{ w \in L^2(Q_\R) : (-\Delta)^\sigma w \in L^2(Q_\R) \right\} = L^2(\R; \polH^{2\sigma}(\Omega) ).
  \]
  This operator generates a \emph{heat-like} semigroup $\{G_2(h) = \exp(-h \Lambda_2) \}_{h \geq 0}$ defined as follows. For $w \in L^2(Q_\R)$ we let $W : (y,t,x) \in \R \times Q_\R \mapsto W(y,x;t) \in \R$ solve
  \[
    \partial_y W(y,x;t) + \Lambda_2 W(y,x;t) = 0, \qquad W(0,x;t) = w(t,x).
  \]
  Then
  \[
    G_2(h) w(t,x) = W(h,x;t).
  \]
  Notice that the time variable here acts as a parameter.
\end{enumerate}

We remark that, from their definition, it follows that these semigroups commute. Define
\[
  K \coloneqq \calD(\Lambda_1) \cap \calD(\Lambda_2),
  \qquad
  \| w \|_K \coloneqq \left( \| w \|_A^2 + \| \Lambda_1 w \|_A^2 + \| \Lambda_2 w \|_A^2 \right)^{1/2}.
\]
We let $\theta \in (0,1)$, $p \in (1,\infty)$, and define
\[
  \calB^\theta_p \coloneqq ( A, K )_{\theta,p}.
\]

The following results are taken from \cite{MR430814}.

\begin{theorem}[interpolation]
\label{thm:Schmeisser}
  In the setting we have devised above, we have:
  \begin{enumerate}[$\bullet$]
    \item \emph{Characterization:}
    \[
      \calB^\theta_p = ( A, \calD(\Lambda_1) )_{\theta,p} \cap ( A, \calD(\Lambda_2) )_{\theta,p}.
    \]

    \item \emph{Reiteration I:} If $\lambda \in (0,1)$ and $r \in (1,\infty)$, then
    \[
      (A, \calB^\theta_p )_{\lambda,r} = \calB^{\theta \lambda}_r.
    \]
    
    \item \emph{Reiteration II:} Let $\theta_1,\theta_2, \lambda \in (0,1)$, $p_1,p_2,r \in (1,\infty)$. Then
    \begin{equation}
    \label{eq:ReiterationII}
      \left( \calB^{\theta_1}_{p_1},\calB^{\theta_2}_{p_2} \right)_{\lambda,r} = \calB^\mu_r,
    \end{equation}
    where
    \[
      \mu = (1-\lambda)\theta_1 + \lambda\theta_2.
    \]
  \end{enumerate}
\end{theorem}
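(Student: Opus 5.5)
The plan is to treat this as an instance of the abstract theory of interpolation between a Banach space and the intersection of the domains of two commuting semigroup generators. This is the framework of Grisvard and Da Prato--Grisvard, as presented in \cite[\S1.13--1.14]{MR503903}, and it is what \cite{MR430814} specializes to the present setting. The first step is to verify the structural hypotheses on $A = L^2(Q_\R)$. We need that $\{G_1(\tau)\}$ and $\{G_2(h)\}$ are strongly continuous, uniformly bounded semigroups, which is clear. Translation is an isometry on $L^2(\R;L^2(\Omega))$. The semigroup $G_2(h)$ acts fibrewise in $t$ as the contraction semigroup $e^{-h(-\Delta)^\sigma}$ on $L^2(\Omega)$. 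We also need that they commute, and this holds because $G_1$ acts only in $t$ while $G_2$ acts only in $x$ with $t$ as a parameter. On Fourier transform in $t$ combined with the spectral decomposition of $-\Delta$, both become multipliers, namely $e^{i\tau\xi}$ and $e^{-h\lambda_k^\sigma}$.

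With these hypotheses in hand, the characterization $\calB^\theta_p = (A,\calD(\Lambda_1))_{\theta,p}\cap(A,\calD(\Lambda_2))_{\theta,p}$ follows from the semigroup description of real interpolation spaces. One has $w\in (A,\calD(\Lambda_j))_{\theta,p}$ iff $\int_0^\infty \big(s^{-\theta}\|G_j(s)w - w\|_A\big)^p\,\frac{ds}{s}<\infty$. The inclusion $(A,K)_{\theta,p}\subset$ the intersection is trivial, since $K\hookrightarrow \calD(\Lambda_j)$. For the reverse inclusion, I would use the $K$-functional. Given $w$ and $s>0$, decompose
\[
  w = \big(w - G_1(s)\!\star\! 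G_2(s) w\big) + G_1(s)\!\star\! G_2(s)w ,
\]
where $G_j(s)\star$ denotes the averaged operator $s^{-1}\int_0^s G_j(r)\,dr$. By commutativity, the second piece lies in $K$, with $\|\Lambda_j(\cdot)\|_A\lesssim s^{-1}\|G_j(s)w-w\|_A$. The first piece is controlled by the sum of the two moduli $\|G_j(s)w-w\|_A$. Inserting these bounds into $K(s,w;A,K)$ and applying Hardy's inequality gives the bound. This step, which needs commutativity and the uniform boundedness of the averages, is the main technical point. It is exactly \cite[Theorem 1.14.?]{MR503903}/\cite{MR430814}.

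Both reiteration statements are then consequences of the classical reiteration theorem. The spaces $\calB^\theta_p=(A,K)_{\theta,p}$ are of class $\theta$ between $A$ and $K$ for every $p$. Reiteration between $A=\calB^0$ (class $0$) and $\calB^\theta_p$ (class $\theta$) yields $(A,K)_{\lambda\theta,r}$, which is Reiteration I. Reiteration between classes $\theta_1,\theta_2$ yields $(A,K)_{(1-\lambda)\theta_1+\lambda\theta_2,r}$, which is \eqref{eq:ReiterationII}.

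The main obstacle is the reverse inclusion in the characterization, since reiteration is formal. If the direct averaging argument runs into difficulty, I would fall back on a Fourier/spectral route. Using $w(t,x)=\sum_k \hat w_k(\xi)\phi_k(x)$ in $(\xi,k)$, the space $K$ becomes the weighted space $L^2$ with weight $1+|\xi|^2+\lambda_k^{2\sigma}$. For $p=2$, the interpolation spaces are then weighted $L^2$ spaces with weight $(1+|\xi|+\lambda_k^\sigma)^{2\theta}$, and this weight is comparable to the product form given by the intersection. For general $p$, however, I would rely on the semigroup argument above.
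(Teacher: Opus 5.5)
Your proposal is correct and follows the same route as the paper, whose proof is only a citation of Schmeisser--Triebel (Theorem~1b there for the characterization, (13) and (14) for the two reiterations). You unpack that citation: commuting bounded $C_0$-semigroups, the averaging-plus-Hardy argument for the reverse inclusion, and the classical reiteration theorem with $A$ of class $0$ and $\calB^\theta_p$ of class $\theta$.

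Two small remarks. First, like the paper's statement, Reiteration~II tacitly needs $\theta_1\neq\theta_2$: if $\theta_1=\theta_2$ and $p_1=p_2\neq r$, the left-hand side is $\calB^{\theta_1}_{p_1}\neq\calB^{\theta_1}_r$. Second, in your $p=2$ fallback the weight $(1+|\xi|+\lambda_k^\sigma)^{2\theta}$ is comparable to the \emph{sum} $(1+|\xi|^2)^{\theta}+\lambda_k^{2\sigma\theta}$ coming from the intersection, not to a product.
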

\begin{proof}
  All these results adapt \cite{MR430814} to our setting. More precisely, referring to \cite{MR430814}, the characterization is Theorem 1b, Reiteration I is (13) and Reiteration II is (14), respectively.
\end{proof}

We now focus on the function spaces $\calB^\theta_2$, as they are the ones that will be used in our study of parabolic problems later. To that end, let us give then an explicit description of these spaces. A related result can be found in \cite[Example 3.6.5]{MR3930629}, but under the assumption that $\Omega$ is a \emph{corner}.

\begin{corollary}[Sobolev-Bochner spaces]
  In the setting of Theorem~\ref{thm:Schmeisser}, we have
  \begin{equation}
  \label{eq:Bt2characterize}
    \calB^\theta_2 = W^{\theta,2}(\R;L^2(\Omega)) \cap L^2(\R; \polH^{2\sigma\theta}(\Omega)).
  \end{equation}
  The family $\{\calB^\theta_2\}_{\theta \in (0,1)}$ forms a real interpolation scale. Moreover, if $\theta_1,\theta_2,\lambda \in(0,1)$, then 
\begin{equation*}
 \label{eq:Bt2reiterate}
  \begin{aligned}
    &\left( \calB^{\theta_1}_2, \calB^{\theta_2}_2 \right)_{\lambda,2} \\
    &= \left(
      W^{{\theta_1},2}\left( \R; L^2(\Omega) \right)
        \cap
      L^2\left( \R; \polH^{2\sigma\theta_1}(\Omega) \right)
      ,
      W^{{\theta_2},2}\left( \R; L^2(\Omega) \right)
        \cap
      L^2\left( \R; \polH^{2\sigma\theta_2}(\Omega) \right)
    \right)_{\lambda,2} \\
    &= W^{\mu,2}\left( \R; L^2(\Omega) \right)
      \cap
      L^2\left(\R; \polH^{2\sigma\mu}(\Omega) \right), 
  \end{aligned}
\end{equation*}
    where
  \[
    \mu = (1-\lambda)\theta_1 + \lambda\theta_2.
  \]
\end{corollary}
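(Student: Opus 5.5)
The plan is to combine the Characterization part of Theorem~\ref{thm:Schmeisser} (with $p=2$) with explicit descriptions of the two elementary interpolation spaces $(A,\calD(\Lambda_1))_{\theta,2}$ and $(A,\calD(\Lambda_2))_{\theta,2}$. The reiteration statement will then follow from Reiteration~II, \eqref{eq:ReiterationII}, with $p_1=p_2=r=2$.

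\emph{The time-derivative factor.} We have
\[
  (A,\calD(\Lambda_1))_{\theta,2} = \left( L^2(\R;L^2(\Omega)), W^{1,2}(\R;L^2(\Omega)) \right)_{\theta,2}.
\]
Since $L^2(\Omega)$ is a Hilbert space, I would identify this with $W^{\theta,2}(\R;L^2(\Omega))$ by the vector-valued Fourier transform in $t$. Plancherel holds for $L^2(\Omega)$-valued functions, so $\calD(\Lambda_1)$ becomes the weighted space $L^2(\R,(1+|\xi|^2)\diff\xi;L^2(\Omega))$. The classical Stein--Weiss interpolation of weighted $L^2$ spaces then gives weight $(1+|\xi|^2)^\theta$. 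This is exactly the Bessel-potential (equivalently Slobodeckij, since $p=2$) norm of $W^{\theta,2}(\R;L^2(\Omega))$.

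Alternatively, one can use the semigroup characterization of real interpolation spaces: $(A,\calD(\Lambda_1))_{\theta,2}$ is the set of $w\in A$ with $\int_0^\infty \tau^{-2\theta}\|G_1(\tau)w-w\|_A^2\,\frac{\diff \tau}{\tau}<\infty$. This is precisely a Besov $B^\theta_{22}=W^{\theta,2}$ norm in time.

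\emph{The spatial factor.} $\Lambda_2=(-\Delta)^\sigma$ is a nonnegative self-adjoint operator on $L^2(\Omega)$, and it acts pointwise in $t$. By the spectral theorem, $\calD(\Lambda_2)=L^2(\R;\polH^{2\sigma}(\Omega))$ is again a weighted $L^2$ space, with weight $(1+\lambda_k^{\sigma})^2$ on the eigencoefficients and $L^2$ in $t$. Interpolating the weights gives
\[
  (A,\calD(\Lambda_2))_{\theta,2}=L^2(\R;\polH^{2\sigma\theta}(\Omega)).
\]
One could instead invoke \eqref{eq:Cwikel} on $\R$ with $X_0=L^2(\Omega)$, $X_1=\polH^{2\sigma}(\Omega)$ and $p_0=p_1=2$, together with the interpolation scale $\polH^{2\sigma\theta}=(\polH^0,\polH^{2\sigma})_{\theta,2}$ recalled earlier. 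If so, I would check that \eqref{eq:Cwikel}, stated on $(0,T)$, holds verbatim on $\R$; this is immediate for $p_0=p_1$ by the weighted-$L^2$ argument.

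Combining the two factors with the Characterization yields \eqref{eq:Bt2characterize}. The remaining claims follow directly.
\begin{itemize}
  \item The scale property is exactly \eqref{eq:ReiterationII} with all integrability indices equal to $2$: $(\calB^{\theta_1}_2,\calB^{\theta_2}_2)_{\lambda,2}=\calB^\mu_2$.
  \item The displayed chain of equalities comes from substituting \eqref{eq:Bt2characterize} for $\theta_1$, $\theta_2$ and $\mu$.
\end{itemize}

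The only genuine point to check is the identification of the abstract spaces $(A,\calD(\Lambda_j))_{\theta,2}$ with the concrete ones, including equivalence of norms. The Hilbert-space/Fourier route handles both factors uniformly. I expect the main care to be needed in making sure the $K$-functional on the graph-norm space $\calD(\Lambda_j)$ matches the weighted space with weight $1+|\text{symbol}|^2$, which is standard.
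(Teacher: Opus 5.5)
Your proposal is correct and follows the same route as the paper. You identify $(A,\calD(\Lambda_1))_{\theta,2}$ and $(A,\calD(\Lambda_2))_{\theta,2}$, insert them into the Characterization of Theorem~\ref{thm:Schmeisser}, and read off the scale and reiteration claims from \eqref{eq:ReiterationII} with all indices equal to $2$. The only difference is how the two elementary identities are justified: the paper cites Amann for the time factor and applies \eqref{eq:Cwikel} on $\R$ without comment for the spatial factor, while your Plancherel/spectral weighted-$L^2$ (Stein--Weiss) argument proves both directly and also settles the point you raise about using \eqref{eq:Cwikel} on $\R$ rather than $(0,T)$.
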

\begin{proof}
  Let us first characterize the interpolation between $A$ and the domain of each generator. First we use \cite[Theorem 2.7.4 and (3.6.3)]{MR3930629} to obtain
  \[
    (A,\calD(\Lambda_1) )_{\theta,2} =
    \left(
      L^2\left( \R; L^2(\Omega) \right)
    ,
      W^{1,2}\left( \R; L^2(\Omega) \right)
    \right)_{\theta,2}
    = W^{\theta,2}\left( \R; L^2(\Omega) \right).
  \]
  Next, we apply \eqref{eq:Cwikel} to obtain
  \[
    (A,\calD(\Lambda_2) )_{\theta,2} =
    \left(
      L^2\left( \R; L^2(\Omega) \right)
    ,
      L^2\left( \R; \polH^{2\sigma}(\Omega) \right)
    \right)_{\theta,2} =
    L^2(\R;\polH^{2\sigma\theta}(\Omega)).
  \]
  Notice that this is the one and only step that requires us to restrict the secondary interpolation index to $q=2$. Finally, we apply these in the characterization given in Theorem~\ref{thm:Schmeisser} to obtain \eqref{eq:Bt2characterize}.
  
  The reiteration  is a restatement of \eqref{eq:ReiterationII}, together with \eqref{eq:Bt2characterize}.
\end{proof}

\section{Nonlocal elliptic equations}
\label{sec:EllipticEquations}

We now begin our description of the self-improving properties \emph{per se} and extend the higher integrability and differentiability result of \cite[Appendix B]{Mengesha2025}. 

\subsection{Linear problems}
We let $A \in L^\infty(\R^d \times \R^d)$ be symmetric, \ie $A(x,y) = A(y,x)$ for almost every $(x,y) \in \R^{2d}$ and strictly positive. Set
\begin{equation}\label{Ellipticity-Boundedness-for-nonlocal}
  \lambda = \essinf_{(x,y) \in \R^{2d}} A(x,y) >0, \qquad \qquad \Lambda = \| A \|_{L^\infty(\R^{2d})}.
\end{equation}
Fix $s \in (0,1)$ and consider the following problem. Find $u : \R^d \to \R$ such that
\begin{equation}
\label{eq:NonLocalEllipticScalar}
  \begin{dcases}
    \vp \int_{\R^d} A(x,y) \frac{u(x) - u(y) }{|x-y|^{d+2s} } \diff y = F(x), & x \in \Omega, \\
    u = 0, &\text{ in } \R^d \setminus \Omega.
  \end{dcases}
\end{equation}

\begin{theorem}[nonlocal elliptic equations]
\label{thm:OurResult}
  There are $\calP >2$ and $\epsilon_0>0$ that depend on $d$, $\Omega$, and $\Lambda/\lambda$ such that, for every $p \in [\calP',\calP]$, there is a corresponding $\sigma\in [s-\epsilon_0, s+\epsilon_0]$ such that if $\rho = 2s - \sigma$ and $F \in W^{-\rho,p}(\Omega)$, then \eqref{eq:NonLocalEllipticScalar} has a unique solution $u \in \widetilde{W}^{\sigma,p}(\Omega)$, with the corresponding estimate. Moreover, if $p\in (2, \calP]$, then $\sigma>s$.
\end{theorem}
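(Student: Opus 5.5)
We will apply the complex-interpolation part of Lemma~\ref{lem:Shneiberg} to the operator $T$ defined by the bilinear form
\[
  \langle T u, v\rangle \coloneqq \frac12\iint_{\R^d\times\R^d} A(x,y)\frac{(u(x)-u(y))(v(x)-v(y))}{|x-y|^{d+2s}}\diff x \diff y .
\]
The base point is the energy space $\widetilde{W}^{s,2}(\Omega)$. Its dual is $W^{-s,2}(\Omega) = (\widetilde{W}^{s,2}(\Omega))'$.

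\textbf{Step 1 (base case).} By \eqref{Ellipticity-Boundedness-for-nonlocal}, the form is bounded on $\widetilde{W}^{s,2}(\Omega)$ by $\Lambda$. It is coercive with constant $\lambda$ relative to the Gagliardo seminorm. On $\widetilde{W}^{s,2}(\Omega)$ this seminorm is a norm, by the fractional Poincar\'e inequality on the bounded set $\Omega$. The Lax--Milgram lemma then shows that $T:\widetilde{W}^{s,2}(\Omega)\to W^{-s,2}(\Omega)$ is an isomorphism, with $\|T^{-1}\|\lesssim 1/\lambda$ and constants depending on $d$ and $\Omega$.

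\textbf{Step 2 (boundedness on an endpoint couple).} Fix $\epsilon_1\in(0,\min\{s,1-s\})$ and an exponent $p_1>2$. Set $X_0=\widetilde{W}^{s-\epsilon_1,p_1'}(\Omega)$ and $X_1=\widetilde{W}^{s+\epsilon_1,p_1}(\Omega)$. For the targets take $Y_j = (\widetilde{W}^{2s-\alpha_j,p_j'}(\Omega))'=W^{-(2s-\alpha_j),p_j}(\Omega)$, where $\alpha_0=s-\epsilon_1$, $\alpha_1=s+\epsilon_1$, $p_0=p_1'$.

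We must show $T:X_j\to Y_j$ is bounded. Write the kernel as
\[
\frac{1}{|x-y|^{d+2s}} = \frac{1}{|x-y|^{d/p_j+\alpha_j}}\cdot\frac{1}{|x-y|^{d/p_j'+(2s-\alpha_j)}} .
\]
Then apply H\"older in $(x,y)$ with exponents $p_j,p_j'$ and use $A\in L^\infty$. This gives $|\langle Tu,v\rangle|\le\Lambda [u]_{W^{\alpha_j,p_j}}[v]_{W^{2s-\alpha_j,p_j'}}$. Here the requirement $2s-\alpha_j\in(0,1)$ is what forces the choice of $\epsilon_1$. The zero extension outside $\Omega$ is built into the tilde spaces.

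At $\theta=1/2$, the interpolation formula \eqref{eq:WapComplex} gives $[X_0,X_1]_{1/2}=\widetilde{W}^{s,2}(\Omega)$. By duality of complex interpolation (reflexive spaces), $[Y_0,Y_1]_{1/2}=W^{-s,2}(\Omega)$.

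\textbf{Step 3 (Shneiberg).} Lemma~\ref{lem:Shneiberg} yields a $\delta>0$, depending only on $\|T^{-1}\|_{1/2}$ and the endpoint bounds, hence on $d$, $\Omega$ and $\Lambda/\lambda$. For $|\theta-1/2|<\delta$, the operator $T$ is invertible from $[Y_0,Y_1]_\theta$ onto $[X_0,X_1]_\theta$.

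Compute the intermediate parameters. We get $\frac1p=\frac{1-\theta}{p_1'}+\frac{\theta}{p_1}$ and $\sigma=s+(2\theta-1)\epsilon_1$. Moreover $[X_0,X_1]_\theta=\widetilde{W}^{\sigma,p}(\Omega)$, and $[Y_0,Y_1]_\theta=W^{-(2s-\sigma),p}(\Omega)$, which is exactly $W^{-\rho,p}(\Omega)$ with $\rho=2s-\sigma$.

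Setting $\calP$ to be the $p$ at $\theta=1/2+\delta$ gives $\calP>2$, and the range $\theta\in(1/2-\delta,1/2+\delta)$ covers $p\in[\calP',\calP]$; shrinking $\delta$ slightly handles the closed interval. Also $\epsilon_0=2\delta\epsilon_1$. Finally, $p>2$ corresponds to $\theta>1/2$, which gives $\sigma>s$.

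Uniqueness follows because $T^{-1}$ on the interpolation space is consistent with the $L^2$-based inverse on the intersection. Shneiberg's lemma gives a genuine inverse, so injectivity there gives uniqueness in $\widetilde{W}^{\sigma,p}(\Omega)$. The estimate is just the operator norm bound.

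\textbf{Main obstacle.} The delicate point is the duality identification $[Y_0,Y_1]_\theta=W^{-\rho,p}(\Omega)$. This needs the duality theorem for complex interpolation, which requires reflexivity and density of $X_0\cap X_1$ in each $X_j$. Density should follow from density of $C_c^\infty(\Omega)$ in $\widetilde{W}^{\alpha,p}(\Omega)$ on Lipschitz domains. We must also make sure that the $W^{-\rho,p}$ used in the statement means this dual space.
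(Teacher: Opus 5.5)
Your proposal is correct and follows the paper's proof. Both arguments use:
\begin{itemize}
\item the same H\"older bound for $\calL_A:\widetilde{W}^{\theta,p}(\Omega)\to W^{-\nu,p}(\Omega)$ with $\theta+\nu=2s$;
\item Lax--Milgram on $\widetilde{W}^{s,2}(\Omega)$;
\item the same endpoint couples $\widetilde{W}^{s\mp\epsilon,\cdot}(\Omega)$ and $W^{-s\mp\epsilon,\cdot}(\Omega)$, anchored at $\theta=\tfrac12$;
\item Shneiberg's lemma, with $\sigma=s+\epsilon(2\theta-1)$ and $\rho=2s-\sigma$.
\end{itemize}
The paper leaves two details implicit that you handle correctly: you separate the endpoint exponent $p_1$ from the final $\calP$ (taken as the $p$ at $\theta=\tfrac12+\delta$), and you justify the identification of the $\bar{Y}_\theta$ scale via the duality theorem for complex interpolation.
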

\begin{proof}
  We recall that, for Lipschitz domains, the scale of spaces $\widetilde{W}^{\alpha,q}(\Omega) = \bar{B}^\alpha_{qq}(\Omega)$ was studied in \cite[Theorem 3.5]{MR1951822}, where its complex and real interpolation properties are discussed; see \eqref{eq:WapComplex} and \eqref{eq:Wa2Real}. Observe that, for $\alpha>0$, it is customary to denote the dual of $\widetilde{W}^{\alpha,q'}(\Omega)$ by $W^{-\alpha,q}(\Omega)$.

  Let $p \in (1,\infty)$ and $\theta, \nu \in (0,1)$ with $\theta + \nu = 2s$. We let the operator $\calL_A : \widetilde{W}^{\theta,p}(\Omega) \to W^{-\nu,p}(\Omega)$ be defined as
  \[
    \langle \calL_A w, \varphi \rangle \coloneqq \int_{\R^d} \int_{\R^d} A(x,y) \frac{w(x) - w(y) }{|x-y|^{\theta}} \frac{\varphi(x) - \varphi(y) }{|x-y|^{\nu}} \frac{\diff x \diff y}{|x-y|^d},
  \]
  where we used that $\theta + \nu = 2s$. This is a bounded linear operator. Indeed,
  \begin{multline*}
    \left| \langle \calL_A w, \varphi \rangle\right| \leq \\
     \Lambda \left[\iint_{\R^{2d}} \left| \frac{w(x) - w(y) }{|x-y|^{\theta}} \right|^p \frac{\diff x \diff y}{|x-y|^d} \right]^{1/p}
    \left[ \iint_{\R^{2d}} \left| \frac{\varphi(x) - \varphi(y) }{|x-y|^{\nu}} \right|^{p'} \frac{\diff x \diff y}{|x-y|^d} \right]^{1/p'}
    \\
    \leq \Lambda |w|_{\widetilde{W}^{\theta,p}(\Omega)} |\varphi|_{\widetilde{W}^{\nu,p'}(\Omega)}.
  \end{multline*}
  Notice that here, implicitly, we are doing computations first for smooth and compactly supported functions, and then using their density. By fixing $\theta =s$ and $p=2$ we also get that
  \[
    \langle \calL_A w,w \rangle \geq \lambda \iint_{\R^{2d}} \left| \frac{w(x) - w(y) }{|x-y|^s} \right|^2 \frac{\diff x \diff y}{|x-y|^d} = \lambda |w|_{\widetilde{W}^{s,2}(\Omega)}^2,
  \]
  and so, by Lax-Milgram, this mapping is invertible and with a bounded inverse.

  Fix $\epsilon \in (0, \min\{s, 1-s\})$ and $\calP>2$. For $\theta \in (0,1)$ consider the interpolation scales
  \[
    \bar{X}_\theta = \left[ \widetilde{W}^{s-\epsilon,\calP'}(\Omega), \widetilde{W}^{s+\epsilon,\calP }(\Omega) \right]_\theta,
    \qquad \qquad
    \bar{Y}_\theta = \left[ W^{-s-\epsilon,\calP'}(\Omega), W^{-s+\epsilon,\calP}(\Omega) \right]_\theta.
  \]
  The previous considerations show that, for $\theta = \tfrac12$, the operator
  \[
    \calL_A^{-1} : \bar{Y}_{1/2} = W^{-s,2}(\Omega) \to \bar{X}_{1/2} = \widetilde{W}^{s,2}(\Omega)
  \]
  exists and it is bounded. Invoking then Lemma~\ref{lem:Shneiberg} we can assert that there is $\delta>0$ such that, for $|\theta - \tfrac12| < \delta$,
  \[
    \calL_A^{-1} : \bar{Y}_\theta = W^{-\rho, p}(\Omega) \to \bar{X}_\theta = \widetilde{W}^{\sigma,p}(\Omega),
  \]
  exists and it is bounded. Here we have set
  \[
    \sigma = s+\epsilon(2\theta-1), \qquad \frac1p = \frac{1-\theta}{\calP'}+ \frac\theta\calP, \qquad \rho = 2s-\sigma.
  \]
  In particular, if $\tfrac12 < \theta < \tfrac12 + \delta$, the claimed result follows.
\end{proof}

The self-improving property obtained in \cite[Theorem 31]{Mengesha2025}  can now be be stated as a particular case of the above as follows.

\begin{corollary}[self-improvement]
\label{self-im-L2data}
  Assume that $F \in L^2(\Omega)$. Then, there is $\epsilon > 0$ such that the solution to \eqref{eq:NonLocalEllipticScalar} satisfies $u \in \widetilde{W}^{s+\epsilon,2+\epsilon}(\Omega)$ with a corresponding estimate.
\end{corollary}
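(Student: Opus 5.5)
The plan is to deduce the corollary from Theorem~\ref{thm:OurResult}: we will check that $L^2(\Omega)$ data lies in the negative-order spaces $W^{-\rho,p}(\Omega)$ for some $p>2$ and $\sigma>s$. Let $\calP>2$ and $\epsilon_0$ be the constants from Theorem~\ref{thm:OurResult}. Recall from its proof that for $\theta\in(\tfrac12,\tfrac12+\delta)$ we have
\[
\sigma = s+\epsilon(2\theta-1), \qquad \frac1p = \frac{1-\theta}{\calP'}+\frac{\theta}{\calP}, \qquad \rho = 2s-\sigma,
\]
with $p>2$ and $\sigma>s$. As $\theta\downarrow\tfrac12$, we have $p\downarrow 2$, $\sigma\downarrow s$ and $\rho\uparrow s>0$. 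So we may take $\theta$ close to $\tfrac12$ and be sure that $\rho>0$ stays bounded away from zero while $p$ is as close to $2$ as we like.

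The key step is the embedding $L^2(\Omega)\hookrightarrow W^{-\rho,p}(\Omega)$, which we prove by duality. Since $W^{-\rho,p}(\Omega)$ is the dual of $\widetilde W^{\rho,p'}(\Omega)$, it is enough to show
\[
\widetilde W^{\rho,p'}(\Omega)\hookrightarrow L^2(\Omega).
\]
Here $p'<2$, and $\Omega$ is bounded. By the fractional Sobolev embedding on $\R^d$, applied to zero extensions (elements of $\widetilde W^{\rho,p'}(\Omega)$ are already functions in $W^{\rho,p'}(\R^d)$ supported in $\overline\Omega$), we get $W^{\rho,p'}(\R^d)\hookrightarrow L^{q}(\R^d)$ with
\[
\frac1q=\frac1{p'}-\frac{\rho}{d}
\]
when $\rho p'<d$, and into every $L^q$ with $q<\infty$ otherwise. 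We therefore need $\frac1{p'}-\frac{\rho}{d}\le \frac12$, i.e. $\frac12-\frac1p\le \frac{\rho}{d}$. This holds once $\theta$ is close enough to $\tfrac12$, because the left side tends to $0$ while $\rho\to s>0$. Then for $F\in L^2(\Omega)$ and $\varphi\in\widetilde W^{\rho,p'}(\Omega)$,
\[
|\langle F,\varphi\rangle|\le \|F\|_{L^2(\Omega)}\|\varphi\|_{L^2(\Omega)}\le C\|F\|_{L^2(\Omega)}\|\varphi\|_{\widetilde W^{\rho,p'}(\Omega)} .
\]

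With this embedding, Theorem~\ref{thm:OurResult} gives a unique $u\in\widetilde W^{\sigma,p}(\Omega)$ with
\[
\|u\|_{\widetilde W^{\sigma,p}(\Omega)}\le C\|F\|_{W^{-\rho,p}(\Omega)}\le C\|F\|_{L^2(\Omega)}.
\]
This $u$ coincides with the $\widetilde W^{s,2}(\Omega)$ Lax--Milgram solution: since $\Omega$ is bounded, $\widetilde W^{\sigma,p}(\Omega)\hookrightarrow\widetilde W^{s,2}(\Omega)$, and the $\widetilde W^{s,2}$ solution is unique. Finally, set $\epsilon:=\min\{\sigma-s,\,p-2\}>0$. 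Then $\sigma\ge s+\epsilon$ and $p\ge 2+\epsilon$, and on a bounded domain with zero extension the spaces are monotone in both indices, so
\[
\widetilde W^{\sigma,p}(\Omega)\hookrightarrow \widetilde W^{s+\epsilon,2+\epsilon}(\Omega).
\]

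The main obstacle I expect is justifying these monotonicity embeddings between the spaces $\widetilde W^{\alpha,q}(\Omega)$: lowering $q$ with $\alpha$ fixed, and lowering $\alpha$. Lowering the integrability with fixed smoothness is not immediate from the Gagliardo seminorm on $\R^d$, since the double integral is over all of $\R^{2d}$. I would try to handle it by splitting $\R^{2d}$ into the region where $x,y$ both lie in a large ball $B\supset\Omega$ and its complement. On the complement the integrand reduces to $|u(x)|^q|x-y|^{-d-\alpha q}$ with $x\in\Omega$ and $|x-y|$ bounded below, which is controlled by $\|u\|_{L^q}$. On $B\times B$ we apply H\"older's inequality. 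If that becomes messy, I would instead use the complex interpolation identity \eqref{eq:WapComplex} together with the fact that both endpoint spaces embed into $\widetilde W^{s,2}(\Omega)$.
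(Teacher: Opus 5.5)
Your route is the same as the paper's. You dualize the fractional Sobolev embedding $\widetilde W^{\rho,p'}(\Omega)\hookrightarrow L^2(\Omega)$ to get $L^2(\Omega)\hookrightarrow W^{-\rho,p}(\Omega)$ for $\theta$ near $\tfrac12$, and then apply Theorem~\ref{thm:OurResult}. Your bookkeeping is more careful than the paper's one-line proof. The paper writes $L^2(\Omega)\hookrightarrow W^{-s-\epsilon,2+\epsilon}(\Omega)$, which is the weaker statement, whereas $\sigma>s$ in Theorem~\ref{thm:OurResult} requires $\rho=2s-\sigma<s$, exactly as you have it. Your identification with the Lax--Milgram solution through $\widetilde W^{\sigma,p}(\Omega)\hookrightarrow\widetilde W^{s,2}(\Omega)$ is also fine, because there $\sigma>s$ strictly.

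The step that fails is the last one.

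\begin{itemize}
\item \textbf{Where it breaks.} With $\epsilon:=\min\{\sigma-s,p-2\}$, suppose $\sigma-s<p-2$. This is not excluded: the ratio of $\sigma-s$ to $p-2$ is fixed by the free parameters in the proof of Theorem~\ref{thm:OurResult}, not by you. Then $s+\epsilon=\sigma$, and you need $\widetilde W^{\sigma,p}(\Omega)\hookrightarrow\widetilde W^{\sigma,2+\epsilon}(\Omega)$ with $2+\epsilon<p$.

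\item \textbf{Why it is false.} Lowering integrability at fixed non-integer smoothness fails, even on bounded domains and for functions supported in $\overline\Omega$. This is a known non-embedding (Mironescu--Sickel). For instance, take $\chi\in C_c^\infty(\Omega)$ nonzero and $f(x)=\chi(x)\sum_j c_j2^{-j\alpha}\cos(2^jx_1)$. The Besov description $W^{\alpha,q}(\R^d)=B^\alpha_{q,q}(\R^d)$ shows $f\in W^{\alpha,q}$ if and only if $(c_j)\in\ell^q$. So choosing $(c_j)\in\ell^q\setminus\ell^r$ with $r<q$ gives $f\in\widetilde W^{\alpha,q}(\Omega)\setminus\widetilde W^{\alpha,r}(\Omega)$.

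\item \textbf{Why your H\"older argument cannot work.} Going from $(\alpha,q)$ to $(\beta,r)$ on $B\times B$ with exponents $q/r$ and $q/(q-r)$ leaves the kernel $|x-y|^{-d+(\alpha-\beta)rq/(q-r)}$. For $\alpha=\beta$ this is the non-integrable $|x-y|^{-d}$. No interpolation argument can rescue an embedding that is false.
\end{itemize}

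The repair is immediate: choose $0<\epsilon<\min\{\sigma-s,p-2\}$ strictly, so that you always give up a little smoothness.

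\begin{itemize}
\item Then $\beta=s+\epsilon<\sigma=\alpha$, and the leftover kernel is integrable on $B\times B$.
\item The region where one of $x,y$ leaves $B$ contributes at most $C\|u\|_{L^{2+\epsilon}(\Omega)}^{2+\epsilon}$.
\item Hence $\widetilde W^{\sigma,p}(\Omega)\hookrightarrow\widetilde W^{s+\epsilon,2+\epsilon}(\Omega)$ holds, and the estimate $\|u\|_{\widetilde W^{s+\epsilon,2+\epsilon}(\Omega)}\le C\|F\|_{L^2(\Omega)}$ follows.
\end{itemize}
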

\begin{proof}
  It suffices to notice that $L^2(\Omega) \hookrightarrow W^{-s-\epsilon,2+\epsilon}(\Omega)$ for
  \[
    \epsilon \left( 1- \frac{d}4 \right) \geq -s,
  \]
  and invoke the previous result.
\end{proof}

\subsection{Semi-linear problems}

One can use the self-improvement property of linear equations to obtain a similar property for semi-linear nonlocal equations of the type  
\begin{equation}
\label{eq:nonlocal_bvp}
    \begin{dcases}
        \vp \int_{\mathbb{R}^d} A(x,y) \frac{u(x) - u(y)}{|x-y|^{d+2s}} \diff y = F(x,u), & x \in \Omega,
        \\
        u = 0, & \text{in } \mathbb{R}^d \setminus \Omega. 
    \end{dcases}
\end{equation} 
In fact, if we fix $p\geq 2$ and $\sigma \geq s$ satisfying the conclusion of Theorem \ref{thm:OurResult} then, for a class of nonlinearities $F$, problem \eqref{eq:nonlocal_bvp} has a solution $u\in \widetilde{W}^{s,2}(\Omega)$. In this case, the function $\widetilde{f}$, defined as,
\[
  \widetilde{f}(x) =  F(x, u(x)),
\]
belongs to $W^{-(2s-\sigma), p}(\Omega)$. Then $u$ solves the linear equation \eqref{eq:NonLocalEllipticScalar} with right hand side $\widetilde{f}$ and we may apply Theorem \ref{thm:OurResult}  to conclude that, in turn, $u$ belongs to $ \widetilde{W}^{\sigma,p}(\Omega)$. The following result provides a class of semi-linear nonlocal problems that fits this setting. 

\begin{proposition}[existence and dual fractional regularity]
\label{Existence-semilinear}
Suppose that $s\in (0, 1)$ and $d>2s$. 
Assume $F: \Omega \times \mathbb{R} \to \mathbb{R}$ is a Carath\'{e}odory function satisfying:
\begin{itemize}
  \item[(H1)](Growth condition) There exists a constant $C > 0$ such that for \mae $x \in \Omega$ and all $z \in \mathbb{R}$,
  \[
    |F(x,z)| \leq C(1 + |z|^q),
  \]
  where $1 < q < \frac{d+2s}{d-2s}$.
  
  \item[(H2)](Ambrosetti-Rabinowitz condition) There exist $\theta > 2$ and $M > 0$ such that for $|z| \geq M$,
  \[
    0 < \theta \mathcal{F}(x,z) \leq z F(x,z),
  \]
  where
  \[
    \mathcal{F}(x,z) = \int_0^z F(x,t) \diff t.
  \]
  
  \item[(H3)] (Local behavior) We have, uniformly in $x \in \Omega$,
  \[
    \lim_{|z| \to 0} \frac{F(x,z)}{|z|} = 0.
  \]
\end{itemize}
Then, problem \eqref{eq:nonlocal_bvp} has a non-trivial weak solution $\widetilde{u} \in \widetilde{W}^{s, 2}(\Omega)$. 
Moreover, if  $p \geq 2$ and $\sigma \in [s, 2s)$ are given with 
\[
  \sigma < d(1 - 1/p)
\]
and
\[
  q \leq \frac{2d}{d-2s} \left( \frac{1}{p} + \frac{2s-\sigma}{d} \right),
\]
in addition to satisfying {\textup{(H1)}}, then the nonlinear term satisfies the dual regularity $F(\cdot, \widetilde{u}(\cdot)) \in W^{-(2s-\sigma), p}(\Omega)$. Consequently, we have
\[
  \widetilde{u} \in \widetilde{W}^{\sigma,p}(\Omega).
\]
\end{proposition}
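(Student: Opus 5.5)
The argument has two parts: existence through the mountain pass theorem, and regularity through a Sobolev embedding combined with duality and Theorem~\ref{thm:OurResult}.

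\emph{Existence.} On $H \coloneqq \widetilde{W}^{s,2}(\Omega)$ take the energy
\[
  J(u) = \frac12 \iint_{\R^{2d}} A(x,y) \frac{|u(x)-u(y)|^2}{|x-y|^{d+2s}} \diff x \diff y - \int_\Omega \mathcal{F}(x,u) \diff x .
\]
By ellipticity, the first term is equivalent to $|u|_{\widetilde{W}^{s,2}(\Omega)}^2$, which is a norm on $H$ by the fractional Poincar\'e inequality. The fractional Sobolev embedding gives $H \hookrightarrow L^{2^*_s}(\Omega)$ with $2^*_s = 2d/(d-2s)$. Since $q+1 < 2^*_s$ by (H1), this embedding is compact into $L^{q+1}(\Omega)$, so $J \in C^1(H)$ and its critical points are exactly the weak solutions. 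The mountain pass geometry is then checked as follows.
\begin{enumerate}[(i)]
  \item By (H1) and (H3), for every $\varepsilon>0$ we have $|\mathcal{F}(x,z)| \le \varepsilon z^2 + C_\varepsilon |z|^{q+1}$. Hence $J(u) \ge c\|u\|^2 - \varepsilon C\|u\|^2 - C\|u\|^{q+1} \ge \beta > 0$ on a small sphere.
  \item By (H2), $\mathcal{F}(x,z) \ge c|z|^\theta - C$. Since $\theta>2$, this gives $J(t\varphi) \to -\infty$ for any fixed $\varphi \neq 0$.
  \item For Palais--Smale sequences, the standard estimate $J(u_n) - \tfrac1\theta\langle J'(u_n),u_n\rangle \ge (\tfrac12-\tfrac1\theta)c\|u_n\|^2 - C$ shows they are bounded. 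Compactness of the embedding into $L^{q+1}$, together with the strong monotonicity of the linear part, then yields a strongly convergent subsequence.
\end{enumerate}
The mountain pass theorem therefore produces a critical point $\widetilde u$ with $J(\widetilde u) \ge \beta > 0 = J(0)$, so $\widetilde u \neq 0$.

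\emph{Dual regularity.} Set $\rho = 2s-\sigma \in (0,s]$. Since $\widetilde u \in L^{2^*_s}$, (H1) gives $\widetilde f \coloneqq F(\cdot,\widetilde u) \in L^{r}(\Omega)$ with $r = 2^*_s/q$, because $|\widetilde f| \le C(1+|\widetilde u|^q)$ and $\Omega$ is bounded. It remains to show $L^r(\Omega) \hookrightarrow W^{-\rho,p}(\Omega) = (\widetilde W^{\rho,p'}(\Omega))^*$, which follows by duality from $\widetilde W^{\rho,p'}(\Omega) \hookrightarrow L^{r'}(\Omega)$. There are two cases.
\begin{itemize}
  \item If $\rho p' < d$, the fractional Sobolev embedding gives $\widetilde W^{\rho,p'} \hookrightarrow L^{t}$ for every $t$ with $\frac1t \ge \frac1{p'} - \frac{\rho}{d}$, using boundedness of $\Omega$. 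We therefore need $\frac1{r'} \ge 1 - \frac1p - \frac{2s-\sigma}{d}$, that is, $\frac{q(d-2s)}{2d} = \frac1r \le \frac1p + \frac{2s-\sigma}{d}$. This is exactly the hypothesis on $q$.
  \item If $\rho p' \ge d$, the embedding holds into every $L^t$ with $t<\infty$, which is more than enough.
\end{itemize}
The assumption $\sigma < d(1-1/p)$ seems designed to keep $\frac1p + \frac{2s-\sigma}{d}$ in a range where these exponents are admissible, for instance $\frac1{r'} \ge 0$ and $r \ge 1$. I will simply track it through the computation.

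\emph{Conclusion.} The function $\widetilde u$ solves \eqref{eq:NonLocalEllipticScalar} with right-hand side $\widetilde f \in W^{-\rho,p}(\Omega)$. By uniqueness in $\widetilde W^{s,2}$, Theorem~\ref{thm:OurResult} identifies $\widetilde u$ with the $\widetilde W^{\sigma,p}$ solution. This presumes that $(p,\sigma)$ is an admissible pair from that theorem, as in the paragraph preceding the statement. The pair is then compatible with the space $\widetilde W^{s,2}$, since $\widetilde W^{\sigma,p} \hookrightarrow \widetilde W^{s,2}$ on a bounded domain for $p\ge2$, $\sigma\ge s$.

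I expect the main obstacle to be the Palais--Smale compactness for the nonlocal operator with a variable kernel $A$. The plan is to pass to the limit in $\langle J'(u_n)-J'(u),u_n-u\rangle \to 0$, using coercivity of the bilinear form and the fact that the Nemytskii term converges strongly in $L^{(q+1)'}$ by compactness. A secondary point is checking that the exponent bookkeeping in the embedding step matches the stated hypotheses exactly.
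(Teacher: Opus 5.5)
Your proposal is correct and follows the paper's route. For existence, the paper simply cites Servadei--Valdinoci, while you verify the mountain pass geometry and the Palais--Smale condition yourself; for regularity, you dualize the embedding $\widetilde W^{2s-\sigma,p'}(\Omega)\hookrightarrow L^{r'}(\Omega)$ and use (H1) with $\widetilde u\in L^{2^*_s}(\Omega)$, which is the paper's computation read in reverse (you fix $r=2^*_s/q$ and check the embedding, the paper fixes $1/r=1/p+(2s-\sigma)/d$ and checks $qr\le 2^*_s$), before applying Theorem~\ref{thm:OurResult}.

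Two small remarks. Your case $\rho p'\ge d$ is vacuous, because $\rho p'\le 2s<d$; for the same reason the hypothesis $\sigma<d(1-1/p)$ plays no role. The embedding $\widetilde W^{\sigma,p}(\Omega)\hookrightarrow\widetilde W^{s,2}(\Omega)$ that you use to identify $\widetilde u$ via uniqueness holds by H\"older when $\sigma>s$, and trivially when $p=2$, but you should not claim it for $\sigma=s$, $p>2$. This is harmless here, since any admissible pair from Theorem~\ref{thm:OurResult} with $p>2$ has $\sigma>s$.
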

\begin{proof}
The existence of a nontrivial solution $\widetilde{u} \in \widetilde{W}^{s, 2}(\Omega)$ via the Mountain Pass Theorem is proved in \cite{Servadei2012}.  To prove that $\widetilde{f}$, defined as
\[
  \widetilde{f}(x) = F(x, \widetilde{u}(x)),
\]
belongs to $W^{-(2s-\sigma), p}(\Omega) = \left( \widetilde{W}^{(2s-\sigma), p'}(\Omega) \right)'$, where $1/p + 1/p' = 1$, we first note that the fractional Sobolev embedding theorem \cite{DiNezza2012} guarantees that the continuous embedding $\widetilde{W}^{(2s-\sigma), p'}(\Omega) \hookrightarrow L^{r'}(\Omega)$ holds when $\frac{1}{r'} = \frac{1}{p'} - \frac{(2s-\sigma)}{d}$. Passing to the conjugate exponents $r$ and $p$ we have
\[
  1 - \frac{1}{r} = 1 - \frac{1}{p} - \frac{(2s-\sigma)}{d}
  \qquad
  \iff 
  \qquad
  \frac{1}{r} = \frac{1}{p} + \frac{(2s-\sigma)}{d}.
\]
By duality, if $\widetilde{f} \in L^r(\Omega)$, then $\widetilde{f} \in W^{-(2s-\sigma), p}(\Omega)$. We must then show that $\widetilde{f} \in L^r(\Omega)$. To that end, using the growth bound from {(H1)}, we evaluate the $L^r$ norm of $\widetilde{f}$,
\[
  \int_\Omega \left| \widetilde{f} \right|^r \diff x = \int_\Omega \left| F(x, \widetilde{u}(x)) \right|^r \diff x 
  \leq C' \int_\Omega (1 + \left| \widetilde{u} \right|^{qr}) \diff x. 
\]
This integral is finite if $\widetilde{u} \in L^{qr}(\Omega)$. Since $\widetilde{u} \in \widetilde{W}^{s,2}(\Omega)$, we know $\widetilde{u} \in L^{2_s^*}(\Omega)$, where
\[
  2_s^* \coloneqq \frac{2d}{d-2s}.
\]
Thus, we require $qr \leq \frac{2d}{d-2s}$. Substituting our expression for $r$ yields
\[
  q \left( \frac{1}{1/p + (2s-\sigma)/d} \right) \leq \frac{2d}{d-2s} 
  \qquad
  \iff 
  \qquad
  q \leq \frac{2d}{d-2s} \left( \frac{1}{p} + \frac{(2s-\sigma)}{d} \right).
\]
But this  is exactly the additional condition imposed on $q$. Therefore, we have that $\widetilde{f} \in W^{-(2s-\sigma), p}(\Omega)$. 
Finally, we apply Theorem~\ref{thm:OurResult} to obtain that $\widetilde{u} \in \widetilde{W}^{\sigma,p}(\Omega)$.

This concludes the proof. 
\end{proof}

\begin{remark}[self-improvement]
From the proof of the result above, we observe that for nonlinearities with moderate growth in $z$, namely those with $1 < q < \frac{d}{d-2s}$, we can establish that $F(\cdot, \widetilde{u}) \in L^{2}(\Omega)$. Corollary \ref{self-im-L2data} then implies the existence of $\epsilon>0$ such that the solution $\widetilde{u}$ to \eqref{eq:nonlocal_bvp} belongs to $\widetilde{W}^{s+\epsilon, 2 + \epsilon}(\Omega)$.
\end{remark}

\section{Elliptic systems}
\label{sec:EllipticSystems}

We next consider elliptic systems, both local and nonlocal.

\subsection{Linearized elasticity with variable moduli}
We consider the following problem. Find $\bu : \bar\Omega \to \R^d$ such that
\begin{equation}
\label{eq:Elastic}
  \begin{dcases}
  -2\DIV( \mu \beps(\bu) ) - \GRAD( \lambda \DIV \bu ) = \bF, &\text{ in } \Omega,
  \\
  \bu = \boldsymbol{0}, & \text{ on } \partial \Omega,
  \end{dcases}
\end{equation}
where  $ \beps(\bu) \coloneqq \frac{1}{2}\left(\nabla \bu^\top + \nabla \bu\right)$ is the symmetric gradient of $\bu$ and  the Lam\'e parameters satisfy
\begin{equation}
\label{eq:lame-conditions}
 \lambda, \mu \in L^\infty(\Omega) \,\, \text{and} \, \, \lambda(x), \mu(x) \in [a,A], \quad \mae x \in \Omega,
\end{equation}
for some $0<a\leq A$.

Our goal now is to prove that solutions to \eqref{eq:Elastic} possess a higher integrability property akin to the classical one by Meyers, see \cite{MR159110}, for elliptic equations. We comment that results of this kind are plentiful in the literature.  For strongly elliptic systems satisfying the Legendre-Hadamard condition, which include \eqref{eq:Elastic} as a special case, this is shown in \cite[Theorem 2]{MR417568}; see also \cite{MR1282226} for such systems with mixed boundary conditions, and  \cite{MR1607245}  for a nonlinear version of the Stokes problem. In particular, in \cite[Appendix A]{Mengesha2025}, we have shown this. Here we will demonstrate that Lemma~\ref{lem:Shneiberg} can be used to obtain a proof.

\begin{proposition}[elliptic systems]
\label{prop:Meyers-elasticity}
  There is $\calP >2$ that depends on $d$, $\Omega$, and $\tfrac a A$, such that if $p \in [\calP',\calP]$ and $\bF  \in \bW^{-1,p}(\Omega)$ then problem \eqref{eq:Elastic} has a unique solution $\bu \in \bW^{1,p}_0(\Omega)$ with
  \[
    \| \GRAD \bu \|_{\bL^p(\Omega)} \leq C \| \bF  \|_{\bW^{-1,p}(\Omega)}.
  \]
  In particular, if $\bF \in \bL^2(\Omega)$, then there is $\bar{q}>2$ such that $u \in \bW^{1,\bar{q}}_0(\Omega)$ with the corresponding estimate.
\end{proposition}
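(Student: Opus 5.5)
We follow the same strategy as in the proof of Theorem~\ref{thm:OurResult}, now with the Sobolev scale $\bW^{1,p}_0(\Omega)$ and its dual.

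\textbf{The operator.} For $p\in(1,\infty)$ define $\calL:\bW^{1,p}_0(\Omega)\to\bW^{-1,p}(\Omega)=(\bW^{1,p'}_0(\Omega))'$ by
\[
  \langle \calL \bw,\bv\rangle \coloneqq \int_\Omega \left( 2\mu\,\beps(\bw):\beps(\bv) + \lambda \DIV\bw\,\DIV\bv \right)\diff x .
\]
By H\"older's inequality and $|\beps(\bw)|,|\DIV\bw|\lesssim|\GRAD\bw|$, we get $|\langle\calL\bw,\bv\rangle|\le CA\|\GRAD\bw\|_{\bL^p(\Omega)}\|\GRAD\bv\|_{\bL^{p'}(\Omega)}$. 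Hence $\calL$ is bounded for every $p$, with norm depending only on $A$ and $d$.

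For $p=2$ we have $\langle\calL\bw,\bw\rangle\ge 2a\|\beps(\bw)\|_{\bL^2(\Omega)}^2$. Korn's first inequality on $\bW^{1,2}_0(\Omega)$, namely $\|\GRAD\bw\|^2_{\bL^2}\le 2\|\beps(\bw)\|^2_{\bL^2}$ (integration by parts suffices, no domain regularity needed), then gives coercivity with constant $a$. By Lax--Milgram, $\calL^{-1}:\bW^{-1,2}(\Omega)\to\bW^{1,2}_0(\Omega)$ exists with norm at most $1/a$.

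\textbf{Interpolation scales.} Fix some $\calP_0>2$ and set
\[
  \bar X_\theta=[\bW^{1,\calP_0'}_0(\Omega),\bW^{1,\calP_0}_0(\Omega)]_\theta,
  \qquad
  \bar Y_\theta=[\bW^{-1,\calP_0'}(\Omega),\bW^{-1,\calP_0}(\Omega)]_\theta .
\]
We need the identifications $\bar X_\theta=\bW^{1,p}_0(\Omega)$ and $\bar Y_\theta=\bW^{-1,p}(\Omega)$, with $1/p=(1-\theta)/\calP_0'+\theta/\calP_0$ (so $p=2$ at $\theta=\tfrac12$). This is the main obstacle. For bounded Lipschitz domains, the complex interpolation of $W^{1,p}_0(\Omega)$ is the Lipschitz-domain case of the results in \cite{MR1951822}, the first-order analogue of \eqref{eq:WapComplex}. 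Alternatively, it follows from the existence of a common bounded extension operator together with a retraction/coretraction argument. The negative scale then follows by the duality theorem for complex interpolation, since the spaces $\bW^{1,p}_0(\Omega)$, $1<p<\infty$, are reflexive. I would cite these results rather than reprove them.

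\textbf{Conclusion.} Since $\calL$ is bounded on both endpoint couples and invertible at $\theta=\tfrac12$, Lemma~\ref{lem:Shneiberg} (complex case) gives $\delta>0$ such that $\calL^{-1}:\bW^{-1,p}(\Omega)\to\bW^{1,p}_0(\Omega)$ is bounded whenever $|\theta-\tfrac12|<\delta$. The corresponding exponents form an interval symmetric under $p\leftrightarrow p'$, because $\theta\mapsto1-\theta$ exchanges $p$ and $p'$. Shrinking slightly, this interval contains $[\calP',\calP]$ for some $\calP>2$. The constants depend only on $\|\calL^{-1}\|_{1/2}\le1/a$, on the endpoint bounds (which depend on $A$), and on $\Omega$ and $d$ through the interpolation constants; this gives the dependence on $d$, $\Omega$ and $\tfrac aA$ after normalizing. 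Uniqueness follows because for $p\ge2$ we have $\bW^{1,p}_0(\Omega)\subset\bW^{1,2}_0(\Omega)$, where uniqueness holds. For $p<2$, uniqueness is part of the conclusion of Lemma~\ref{lem:Shneiberg}, since $T^{-1}$ exists on $\bar Y_\theta$.

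\textbf{The case $\bF\in\bL^2(\Omega)$.} By the Sobolev embedding $\bW^{1,\bar q'}_0(\Omega)\hookrightarrow\bL^2(\Omega)$ for $\bar q'$ close to $2$, with $\bar q'\ge 2d/(d+2)$ (trivial for $d\le2$), duality gives $\bL^2(\Omega)\hookrightarrow\bW^{-1,\bar q}(\Omega)$. Choosing $\bar q\in(2,\calP]$ accordingly and applying the first part yields $\bu\in\bW^{1,\bar q}_0(\Omega)$ with the corresponding estimate.
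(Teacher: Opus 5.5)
Your proposal is correct and takes essentially the same route as the paper: coercivity at $p=2$ via Korn's inequality and Lax--Milgram, followed by Lemma~\ref{lem:Shneiberg} on the complex interpolation scales generated by $\bW^{1,p}_0(\Omega)$ and $\bW^{-1,p}(\Omega)$, exactly as in the proof of Theorem~\ref{thm:OurResult}. You also correctly supply details the paper leaves implicit: the first-order interpolation identification on Lipschitz domains, uniqueness for $p<2$, and the embedding $\bL^2(\Omega)\hookrightarrow\bW^{-1,\bar q}(\Omega)$.
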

\begin{proof}
Define the operator $\mathbb{L}$ via
\[
  \langle \mathbb{L}\bw, \bv\rangle \coloneqq \int_{\Omega}\left(2\mu(x)\beps(\bw(x)):\beps({\bv}(x)) + \lambda(x)\DIV\bw(x) \DIV \bv(x)\right) \diff x,
\]
so that the system of equations \eqref{eq:Elastic} may be rewritten as $\mathbb{L} \bu = \bF $.

Owing to the upper bound in \eqref{eq:lame-conditions}, for any $p \in (1,\infty)$, we have that $\mathbb{L}: \bW^{1,p}_0(\Omega) \to  \bW^{-1,p}(\Omega)$ is bounded. In addition, owing to Korn's inequality we have, for every $\bw\in \bW^{1,2}_0(\Omega)$
\[
  \langle \mathbb{L}\bw, \bw\rangle \geq a \| \GRAD \bw\|_{\bL^2(\Omega)}^2.
\]
As a consequence, we  may apply Lax-Milgram to conclude that $\mathbb{L}^{-1}$ exists and is bounded.  The remaining part of the proof is exactly the same as the proof of Theorem~\ref{thm:OurResult}.
\end{proof}

\subsection{Strongly coupled nonlocal systems}

Here for a given $s\in (0, 1)$ we consider the strongly coupled linear system of nonlocal equations given by 
\begin{equation}
\label{eq:SC-nonlocalsystem}
  \begin{dcases}
    \vp \int_{\mathbb{R}^{d}} \frac{ (x-y) \otimes (x-y) }{|x-y|^2} \left( \bu(x) - \bu(y) \right) \frac{A(x,y) \diff y}{|x-y|^{d+2s}} = \bF(x), & x \in \Omega,
    \\
    \bu = \boldsymbol{0}, &\text{in } \mathbb{R}^d \setminus \Omega,
  \end{dcases}
\end{equation}
where the coefficient $A \in L^{\infty}(\mathbb{R}^{2d})$ is symmetric and satisfies the ellipticity and boundedness assumptions given in \eqref{Ellipticity-Boundedness-for-nonlocal}.  This system of equations appears in applications such as in the so-called bond-based model of perydinamics.

Define the mapping $\mathfrak{L}_A$ via
\begin{multline*}
    \langle \mathfrak{L}_A\bv, \bw \rangle \coloneqq \\
     \iint_{\R^{2d}} A(x,y)
      \left( \bw(x) - \bw(y) \right)^\top \frac{ (x-y) \otimes (x-y) }{|x-y|^2} \left( \bv(x) - \bv(y) \right) \frac{\diff x \diff y}{|x-y|^{d+2s}},
\end{multline*}
and, just as in the case of nonlocal equations, for $p \in (1,\infty)$ and $\theta, \nu \in (0,1)$ with $\theta + \nu = 2s$, we have that, for any $\bv\in \widetilde{\mathbf{W}}^{\theta, p}(\Omega)$ and $\bw\in \widetilde{\mathbf{W}}^{\nu, p'}(\Omega)$,
\begin{align*}
  \left| \langle \mathfrak{L}_A\bv, \bw \rangle \right| &\leq \Lambda
  \left(
    \iint_{\R^{2d}}
      \left| \left( \bv(x) - \bv(y) \right)\cdot\frac{(x-y)}{|x-y|}\right|^{p}\frac{\diff x \diff y}{|x-y|^{d+p\theta}}
  \right)^{1/p}
  \\
  &\times
  \left(
    \iint_{\R^{2d}}
      \left| \left( \bw(x) - \bw(y) \right)\cdot\frac{(x-y)}{|x-y|}\right|^{p'}\frac{\diff x \diff y}{|x-y|^{d+p'\nu}}
  \right)^{1/p'}
  \\
  &\leq \Lambda |\bv|_{\widetilde{\mathbf{W}}^{\theta, p}(\Omega)}  |\bw |_{\widetilde{\mathbf{W}}^{\nu, p'}(\Omega)},
\end{align*}
and so $\mathfrak{L}_A : \widetilde{\mathbf{W}}^{\theta, p}(\Omega) \to \mathbf{W}^{-\nu, p}(\Omega)$ boundedly.
Moreover, for $p=2$ and $\theta=\nu=s$, we have
\begin{align*}
  \langle \mathfrak{L}_A\bv, \bv \rangle \geq
  \lambda \iint_{\R^{2d}}
    \left|\left( \bv(x) - \bv(y) \right)\cdot\frac{(x-y)}{|x-y|}\right|^{2}\frac{\diff x \diff y}{|x-y|^{d+2s}}
  \geq C \lambda|\bv|_{\widetilde{\mathbf{W}}^{s, 2}(\Omega)}^2,
\end{align*}
where the second inequality follows from the fractional Korn's inequality \cite[Theorem 1.1]{Mengesha-Scott-FKorn}. Again, by Lax-Milgram,
$\mathfrak{L}_A : \widetilde{\bW}^{s,2}(\Omega) \to \bW^{-s,2}(\Omega)$ is invertible. Proceeding as in the proof of Theorem~\ref{thm:OurResult}, we can establish the following result.

\begin{theorem}[nonlocal elliptic systems]
\label{thm:Ourresult-system}
  There are $\calP >2$ and $\epsilon_0>0$ that depend on $d$, $\Omega$, and $\Lambda/\lambda$ such that if $p \in [\calP',\calP]$ there is a corresponding $\sigma\in [s-\epsilon_0, s+\epsilon_0]$ such that if $\rho = 2s - \sigma$ and
  $\mathbf{F} \in \mathbf{W}^{-\rho,p}(\Omega)$, then \eqref{eq:SC-nonlocalsystem} has a unique solution $u \in \widetilde{\mathbf{W}}^{\sigma,p}(\Omega)$, with the corresponding estimate. Moreover, if $p\in (2, \calP]$, then $\sigma>s$.
\end{theorem}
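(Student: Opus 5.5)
The plan is to repeat the argument of Theorem~\ref{thm:OurResult} verbatim, with $\calL_A$ replaced by the operator $\mathfrak{L}_A$. The two ingredients needed for that argument have just been established above the statement.

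First, for every $p \in (1,\infty)$ and every $\theta,\nu \in (0,1)$ with $\theta+\nu = 2s$, the map $\mathfrak{L}_A : \widetilde{\bW}^{\theta,p}(\Omega) \to \bW^{-\nu,p}(\Omega)$ is bounded, with norm at most $\Lambda$. We obtain this first for smooth compactly supported fields and then extend by density.

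Second, at $p=2$, $\theta=\nu=s$, the fractional Korn inequality \cite[Theorem 1.1]{Mengesha-Scott-FKorn} gives coercivity with constant $C\lambda$, so Lax--Milgram shows that $\mathfrak{L}_A^{-1} : \bW^{-s,2}(\Omega) \to \widetilde{\bW}^{s,2}(\Omega)$ exists and is bounded. Its norm is controlled by $(C\lambda)^{-1}$, where $C$ depends only on $d$, $s$ and $\Omega$.

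Next, fix $\epsilon \in (0,\min\{s,1-s\})$ and $\calP>2$, and introduce the complex interpolation scales
\[
  \bar{X}_\theta = \left[ \widetilde{\bW}^{s-\epsilon,\calP'}(\Omega), \widetilde{\bW}^{s+\epsilon,\calP}(\Omega) \right]_\theta,
  \qquad
  \bar{Y}_\theta = \left[ \bW^{-s-\epsilon,\calP'}(\Omega), \bW^{-s+\epsilon,\calP}(\Omega) \right]_\theta .
\]
Identifying these scales is the one point needing a remark. Vector-valued spaces $\widetilde{\bW}^{\alpha,p}(\Omega) = (\widetilde{W}^{\alpha,p}(\Omega))^d$ are finite products, and complex interpolation commutes with finite products. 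Hence \eqref{eq:WapComplex} gives $\bar{X}_\theta = \widetilde{\bW}^{\sigma,p}(\Omega)$ with
\[
  \sigma = s + \epsilon(2\theta-1), \qquad \frac1p = \frac{1-\theta}{\calP'} + \frac{\theta}{\calP}.
\]
For the dual scale, we use the duality theorem for complex interpolation. This applies because the spaces $\widetilde{W}^{\alpha,p}(\Omega)$, $1<p<\infty$, are reflexive and their intersection is dense. It gives $\bar{Y}_\theta = \bW^{-\rho,p}(\Omega)$ with $\rho = 2s-\sigma$, exactly as was tacitly used in the scalar case.

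At the endpoints, $\mathfrak{L}_A$ is bounded as a map $\bar{X}_0\to \bar{Y}_0$ (take $\theta = s-\epsilon$, $\nu = s+\epsilon$, $p=\calP'$) and as a map $\bar{X}_1 \to \bar{Y}_1$ (take $\theta = s+\epsilon$, $\nu = s-\epsilon$, $p = \calP$). At the midpoint $\bar{X}_{1/2}=\widetilde{\bW}^{s,2}(\Omega)$ and $\bar{Y}_{1/2}=\bW^{-s,2}(\Omega)$, where $\mathfrak{L}_A$ is invertible.

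Lemma~\ref{lem:Shneiberg}, part 1, now yields $\delta>0$ such that $\mathfrak{L}_A^{-1} : \bW^{-\rho,p}(\Omega) \to \widetilde{\bW}^{\sigma,p}(\Omega)$ is bounded whenever $|\theta-\tfrac12|<\delta$. This gives existence, uniqueness and the estimate. Moreover, $\delta$ and the bounds depend only on the endpoint norms (bounded by $\Lambda$) and the midpoint inverse norm (bounded by $(C\lambda)^{-1}$), that is, on $d$, $\Omega$ and $\Lambda/\lambda$ after normalizing $A$ by $\lambda$.

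Finally, we relabel the parameters. We shrink so that $\theta$ ranges over $[\tfrac12-\delta/2,\tfrac12+\delta/2]$, redefine $\calP$ as the exponent $p$ corresponding to $\theta=\tfrac12+\delta/2$ (which is still $>2$), and set $\epsilon_0 = \epsilon\delta$. This gives the stated ranges $p\in[\calP',\calP]$ and $\sigma \in [s-\epsilon_0,s+\epsilon_0]$. Since $\theta\mapsto 1/p$ and $\theta\mapsto\sigma$ are both monotone, $p>2$ corresponds to $\theta>\tfrac12$ and hence to $\sigma>s$. I expect no real obstacle beyond the dual-space interpolation identification, which is handled as in Theorem~\ref{thm:OurResult}.
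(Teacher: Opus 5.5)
Your proposal is correct and follows the paper's route: boundedness of $\mathfrak{L}_A$ from $\widetilde{\bW}^{\theta,p}(\Omega)$ to $\bW^{-\nu,p}(\Omega)$ for $\theta+\nu=2s$, coercivity at $p=2$ via the fractional Korn inequality plus Lax--Milgram, and then Shneiberg's lemma on the same complex scales, "proceeding as in the proof of Theorem~\ref{thm:OurResult}." Your extra remarks just make explicit details the paper leaves tacit: componentwise interpolation of the vector-valued spaces, the duality identification $\bar{Y}_\theta=\bW^{-\rho,p}(\Omega)$, and the relabeling that yields the closed range $[\calP',\calP]$ with $\epsilon_0=\epsilon\delta$.
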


We comment that this result improves upon \cite[Theorem 1.1]{Mengesha-Scott-FKorn} where, for a related problem, self-improvement in the interior was proved.

\begin{remark}[semi-linear systems]
As we have shown earlier, the self-improvement property of linear elliptic systems can be used to obtain a similar for semi-linear elliptic (local and nonlocal) systems. For example, consider the semi-linear system 
\begin{equation}
\label{eq:Elastic-semilinear}
\begin{dcases}
  -2\DIV( \mu \beps(\bu) ) - \GRAD( \lambda \DIV \bu ) = \bF  (\bu), &\text{ in } \Omega,
  \\
   \bu = \boldsymbol{0}, & \text{ on } \partial \Omega. 
  \end{dcases}
\end{equation}
Assume that $p\geq 2$ satisfies the conclusion of  Theorem \ref{prop:Meyers-elasticity}. Then for a class of nonlinearities $\bF$ a solution $\bu \in \bW^{1,2}_0(\Omega)$ to the semi-linear system  \eqref{eq:Elastic-semilinear} exists and the function
$\widetilde{\bef}$, defined as
\[
  \widetilde{\bef}(x) =  \bF(\bu(x)),
\]
belongs to $\bW^{-1, p}(\Omega)$. It then follows that $\bu$ solves the linear system \eqref{eq:Elastic} with right hand side $\widetilde{\bef}$ and we may apply Theorem \ref{prop:Meyers-elasticity}  to conclude that $ \bu$ is uniquely determined and belongs to $ \bW^{1,p}_0(\Omega)$.  Similar results can be stated for nonlocal semi-linear elliptic systems. 
\end{remark}

\section{Nonlocal parabolic equations}
\label{sec:Parabolic}
In this section, we give a description of the self-improving properties of nonlocal parabolic equation with the spatial variable in a bounded set.  See \cite{MR3907738} for a similar result for problems posed on $\mathbb{R}^{d}$, which motivated our work. 

\subsection{Linear problems}
Let us first describe the linear nonlocal parabolic problems we shall be interested in this section. We recall that, for any $s \in (0,1)$, $\widetilde{W}^{s,2}(\Omega) \hookrightarrow L^2(\Omega) \hookrightarrow W^{-s,2}(\Omega)$ forms a Gelfand triple. Since the mapping $\calL_A$, defined in the proof of Theorem~\ref{thm:OurResult}, is coercive on $\widetilde{W}^{s,2}(\Omega)$, for every $u_0 \in L^2(\Omega)$ and $F \in L^2(0,T;W^{-s,2}(\Omega))$, the problem
\begin{equation}
\label{eq:NLParabolic}
  \begin{dcases}
    \diff_t u + \calL_A u = F, & t \in (0,T),
    \\
    u(0)= u_0,
  \end{dcases}
\end{equation}
has a unique solution
\[
  u \in L^2(0,T; \widetilde{W}^{s,2}(\Omega) ) \cap W^{1,2}(0,T; W^{-s,2}(\Omega)) .
\]

Our final goal is to show self-improving properties for this problem. We must comment that, since our approach is based on a perturbation-type result, our statements are far from the so-called \emph{maximal regularity} estimates for evolution equations; see \cite[Chapter III]{MR1345385},  which, nevertheless, require different smoothness assumptions.

We begin with a well-known result that we shall improve upon.

\begin{proposition}[smoother data]\label{prop:smoothdata}
  Assume that, in  \eqref{eq:NLParabolic}, we have $u_0 \in \widetilde{W}^{s,2}(\Omega)$ and $F \in L^2(0,T;L^2(\Omega))$. Then
  \[
    u \in L^\infty(0,T;\widetilde{W}^{s,2}(\Omega)) \cap W^{1,2}(0,T;L^2(\Omega)).
  \]
\end{proposition}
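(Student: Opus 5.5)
This is the classical energy estimate obtained by testing the equation with $\diff_t u$. We work first with Galerkin approximations, or with a regularization, so that all manipulations are legitimate, and then pass to the limit.

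\textbf{Setup.} Since $\calL_A$ is symmetric (because $A(x,y)=A(y,x)$), bounded, and coercive on $\widetilde{W}^{s,2}(\Omega)$, it is self-adjoint and positive on $L^2(\Omega)$. Its form domain is $\widetilde{W}^{s,2}(\Omega)$, and the associated energy
\[
  E(w) \coloneqq \tfrac12 \langle \calL_A w, w\rangle
\]
satisfies
\[
  \tfrac{\lambda}{2}|w|^2_{\widetilde{W}^{s,2}(\Omega)} \le E(w) \le \tfrac{\Lambda}{2}|w|^2_{\widetilde{W}^{s,2}(\Omega)}.
\]
We take the Galerkin basis to be the $L^2$-orthonormal eigenfunctions of $\calL_A$; its inverse is compact by the compact embedding $\widetilde{W}^{s,2}\hookrightarrow L^2$. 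Let $u_m$ solve the projected system with data $P_m u_0$ and $P_m F$. Because the basis diagonalizes $\calL_A$, the projection $P_m$ is stable both in $L^2$ and in the energy norm, so
\[
  E(P_m u_0)\le E(u_0).
\]

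\textbf{Energy estimate.} Test the Galerkin equation with $\diff_t u_m$. By the symmetry of the bilinear form,
\[
  \|\diff_t u_m(t)\|_{L^2}^2 + \frac{\diff}{\diff t} E(u_m(t)) = (F(t),\diff_t u_m(t))_{L^2}.
\]
Apply Young's inequality to the right-hand side and absorb $\tfrac12\|\diff_t u_m\|^2_{L^2}$. Integrating from $0$ to $t$ then gives
\[
  \tfrac12\int_0^t \|\diff_t u_m\|_{L^2}^2 + E(u_m(t)) \le E(u_0) + \tfrac12 \|F\|^2_{L^2(0,T;L^2(\Omega))}.
\]
Combined with the coercivity of $E$, this bounds $u_m$ uniformly in $m$ in $L^\infty(0,T;\widetilde{W}^{s,2}(\Omega))\cap W^{1,2}(0,T;L^2(\Omega))$, with a constant depending only on $\lambda$, $\Lambda$, $\|u_0\|_{\widetilde{W}^{s,2}}$, and $\|F\|_{L^2(L^2)}$.

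\textbf{Passage to the limit.} Extract subsequences converging weak-$*$ in $L^\infty(0,T;\widetilde{W}^{s,2})$ and weakly in $W^{1,2}(0,T;L^2)$. The limit solves \eqref{eq:NLParabolic}. By the uniqueness of the solution already established in $L^2(0,T;\widetilde{W}^{s,2})\cap W^{1,2}(0,T;W^{-s,2})$, the limit coincides with $u$. Weak lower semicontinuity of the norms transfers the estimate to $u$.

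The main subtlety is making the identity $\langle \calL_A u,\diff_t u\rangle = \frac{\diff}{\diff t}E(u)$ rigorous, since a priori $\diff_t u$ is only in $W^{-s,2}$. The Galerkin step handles this, and it relies on the symmetry of $A$. An alternative route is abstract: $\calL_A$ is a nonnegative self-adjoint operator on $L^2$, and one can cite the Lions maximal-regularity theorem for symmetric forms.
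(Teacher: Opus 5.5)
Your proof is correct. The paper does not argue this itself but cites the classical regularity result in Dautray--Lions (Vol.~5, Ch.~XVIII, \S3.4, Remark~2), and your Galerkin energy estimate (testing with $\partial_t u_m$ in the eigenbasis of $\calL_A$, using the symmetry of $A$, then identifying the limit through uniqueness) is the standard argument behind that reference; your closing alternative of invoking Lions' theorem for symmetric forms is precisely what the paper does.
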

\begin{proof}
  See, for instance, \cite[Chapter XVIII, Section 3.4, Remark 2]{MR1156075}.
\end{proof}

We begin with a characterization of the domain of $(-\Delta)^\sigma$.

\begin{lemma}[fractional Laplacian]
\label{lem:FractLaplace}
  Let $\Omega \subset \R^d$ be convex or with $C^2$ boundary. Then we have
  \[
    \polH^\sigma(\Omega) = \begin{dcases}
                             \widetilde{W}^{\sigma,2}(\Omega), & \sigma \in (0,1),
                             \\
                             W^{1,2}_0(\Omega) \cap \widetilde{W}^{\sigma,2}(\Omega), & \sigma \in [1,2],
                           \end{dcases}
  \]
  with equivalent norms.
\end{lemma}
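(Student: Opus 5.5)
Here is the plan: anchor the two integer endpoints of the scale, $\sigma=0$ and $\sigma=2$, then fill in the rest by real interpolation with $q=2$. The spectral scale $\polH^\sigma(\Omega)$ is a real interpolation scale (\cite[Theorem 4.36]{MR3753604}). So it suffices to identify $\polH^1(\Omega)$ and $\polH^2(\Omega)$, and then to identify the interpolation spaces between these anchors and $\polH^0(\Omega)=L^2(\Omega)$ with the $\widetilde{W}$ spaces.

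\textbf{The endpoint $\sigma=1$.} This is the form domain of the Dirichlet Laplacian, so $\polH^1(\Omega)=W^{1,2}_0(\Omega)$ with equivalent norms. Indeed, $\|(-\Delta)^{1/2}w\|_{L^2}=\|\GRAD w\|_{L^2}$, and the Poincar\'e inequality gives the norm equivalence. For a Lipschitz domain, $W^{1,2}_0(\Omega)$ coincides with $\widetilde{W}^{1,2}(\Omega)$ (extension by zero).

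\textbf{The endpoint $\sigma=2$.} This is where convexity or $C^2$ regularity enters: $H^2$-regularity of the Dirichlet problem (Grisvard for convex domains, standard theory for $C^2$ boundaries) gives
\[
  \polH^2(\Omega)=\calD(-\Delta)=W^{2,2}(\Omega)\cap W^{1,2}_0(\Omega),
\]
with equivalent norms. I read $\widetilde{W}^{\sigma,2}(\Omega)$ for $\sigma\ge1$ in the statement as $W^{\sigma,2}(\Omega)$ restricted to the boundary condition $W^{1,2}_0$, so this is the claimed form at $\sigma=2$.

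\textbf{The range $\sigma\in(0,1)$.} By reiteration,
\[
  \polH^\sigma(\Omega)=(L^2(\Omega),\polH^1(\Omega))_{\sigma,2}=(\widetilde{W}^{0,2}(\Omega),\widetilde{W}^{1,2}(\Omega))_{\sigma,2}.
\]
By \eqref{eq:Wa2Real}, taken in the form valid up to the endpoints $0$ and $1$ (this is \cite[Theorem 3.5]{MR1951822} on Lipschitz domains), the right-hand side is $\widetilde{W}^{\sigma,2}(\Omega)$. One could instead interpolate between two interior indices $\alpha_0<\sigma<\alpha_1$ in $(0,1)$, but that is circular, so the endpoint version of the interpolation result is what is needed.

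\textbf{The range $\sigma\in(1,2)$.} Here
\[
  \polH^\sigma(\Omega)=(\polH^1(\Omega),\polH^2(\Omega))_{\sigma-1,2}=(W^{1,2}_0(\Omega),W^{2,2}(\Omega)\cap W^{1,2}_0(\Omega))_{\sigma-1,2}.
\]
This is the main obstacle: interpolating subspaces carrying the homogeneous boundary condition. I would handle it with a retraction--coretraction argument. First, $(W^{1,2}(\Omega),W^{2,2}(\Omega))_{\sigma-1,2}=W^{\sigma,2}(\Omega)$ via a universal (Stein) extension operator on Lipschitz domains. Second, I need a bounded projection $P$ onto the zero-trace subspace that acts simultaneously on $W^{1,2}$ and $W^{2,2}$. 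A candidate is
\[
  P = \mathrm{I} - E\circ\mathrm{tr},
\]
where $E$ is a right inverse of the trace that is bounded both from $H^{1/2}(\partial\Omega)$ and from $H^{3/2}(\partial\Omega)$. Such an $E$ is available for $C^2$ boundaries, and in the convex case it is plausible since convex domains are locally Lipschitz with sufficient regularity in the $H^2$ setting; if that fails, I would fall back on Grisvard's interpolation results for convex polygons and polyhedra. Interpolating complemented subspaces then gives $W^{1,2}_0(\Omega)\cap W^{\sigma,2}(\Omega)$ for $\sigma\in(1,2)$, which, together with the endpoint $\sigma=2$, completes the case $\sigma\in[1,2]$. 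The norm equivalences follow throughout because every identification above is an isomorphism of interpolation functors.
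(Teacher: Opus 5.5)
\textbf{The gap: the convex case for $\sigma\in(1,2)$.} Your reduction is a different route from the paper, which simply cites Fujiwara for smooth domains and Bonito--Pasciak for convex ones. You anchor at $\polH^1=W^{1,2}_0(\Omega)$ and $\polH^2=W^{2,2}(\Omega)\cap W^{1,2}_0(\Omega)$ and then reiterate. This is sound for $\sigma\in(0,1]$, for $\sigma=2$, and for $\sigma\in(1,2)$ when $\partial\Omega\in C^2$. Your reading of $\widetilde{W}^{\sigma,2}(\Omega)$ for $\sigma\ge 1$ is forced, not just convenient. With the paper's literal definition, $\widetilde{W}^{2,2}(\Omega)=W^{2,2}_0(\Omega)$ also imposes $\partial_n u=0$, so the identity would fail on $[3/2,2]$.

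The problem in the convex case is the projection $P=\mathrm{I}-E\circ\mathrm{tr}$. It needs the trace to map $W^{2,2}(\Omega)$ boundedly onto $H^{3/2}(\partial\Omega)$, with a bounded right inverse that also works from $H^{1/2}(\partial\Omega)$. A convex domain is in general only Lipschitz, and there this fails. $H^{3/2}(\partial\Omega)$ is not even intrinsically defined on a Lipschitz boundary. Already for polygons, the trace space of $W^{2,2}$ carries compatibility conditions at the corners. Calling such an $E$ ``plausible'', with a fallback to Grisvard's results for polygons and polyhedra, does not cover a general convex domain in $\R^d$. As written, the range $\sigma\in(1,2)$ is therefore proved only for $C^2$ boundaries.

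\textbf{The fix: build the projection from the equation, not from the trace.} For $u\in W^{1,2}(\Omega)$, let $Pu\in W^{1,2}_0(\Omega)$ solve
\[
\int_\Omega \GRAD Pu\cdot\GRAD v=\int_\Omega \GRAD u\cdot\GRAD v \quad\text{for all } v\in W^{1,2}_0(\Omega),
\]
that is, $Pu=(-\Delta_D)^{-1}(-\Delta u)$. Then $(\mathrm{I}-P)u$ is the harmonic function with the boundary values of $u$, and it plays the role of $E\circ\mathrm{tr}$.
\begin{enumerate}[1.]
\item By Lax--Milgram, $P$ is a bounded projection of $W^{1,2}(\Omega)$ onto $W^{1,2}_0(\Omega)$.
\item If $u\in W^{2,2}(\Omega)$, then $-\Delta u\in L^2(\Omega)$. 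The same $H^2$-regularity you already use at $\sigma=2$ (convex or $C^2$) gives $Pu\in W^{2,2}(\Omega)\cap W^{1,2}_0(\Omega)$ with $\|Pu\|_{W^{2,2}(\Omega)}\lesssim\|u\|_{W^{2,2}(\Omega)}$.
\item The complemented-subspace theorem you planned to use, together with $(W^{1,2}(\Omega),W^{2,2}(\Omega))_{\theta,2}=W^{1+\theta,2}(\Omega)$, then yields
\[
(W^{1,2}_0(\Omega),W^{2,2}(\Omega)\cap W^{1,2}_0(\Omega))_{\theta,2}=\{u\in W^{1+\theta,2}(\Omega): Pu=u\}=W^{1+\theta,2}(\Omega)\cap W^{1,2}_0(\Omega).
\]
\end{enumerate}
This works uniformly for convex and $C^2$ domains and needs no trace spaces on $\partial\Omega$.

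Your route also buys something the citation does not. The range $\sigma\in(0,1)$ uses only $\polH^1=W^{1,2}_0(\Omega)$ and \eqref{eq:Wa2Real} up to the endpoints, so it holds on every bounded Lipschitz domain. That is the only range invoked in Theorem~\ref{thm:NLParabolic} once $\delta$ is chosen with $s+\delta<1$. This suggests the convexity or $C^2$ hypothesis there can be dropped.
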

\begin{proof}
  See \cite{MR216336} for the case of smooth domains and, for instance, \cite[Proposition 4.1.]{MR3356020} for convex domains.
\end{proof}

Here and in what follows we set $\sigma = s$ in the definition of the spaces $\calB^\theta_2$ that were introduced in Section~\ref{sec:Stuff}. Thus, we may recast \eqref{eq:NLParabolic} as: given $u_0 \in \polH^0(\Omega)$ and $F \in L^2(0,T; \polH^{-s}(\Omega))$, problem \eqref{eq:NLParabolic} has a unique solution
\[
  u \in L^2(0,T; \polH^s(\Omega) ) \cap W^{1,2}(0,T;\polH^{-s}(\Omega)).
\]

\begin{theorem}[nonlocal parabolic problems]
\label{thm:NLParabolic}
  Assume that $\Omega$ is convex or that it has a $C^2$ boundary; and that, in \eqref{eq:NLParabolic}, we have $u_0=0$ and $F \in L^2(0,T;L^2(\Omega))$. Then, for some $\delta >0$, the solution to \eqref{eq:NLParabolic} satisfies
  \begin{equation}
  \label{eq:HigherIntNLParabolic}
    u \in L^2( 0,T; \widetilde{W}^{s+\delta,2}(\Omega)) \cap W^{1/2+\delta}(0,T;L^2(\Omega)) .
  \end{equation}
  In addition, there is $\nu>0$, for which
  \begin{equation}
  \label{eq:BetterTimeDerivNLParabolic}
    u \in W^{1,2}(0,T; W^{-s+\nu,2}(\Omega)).
  \end{equation}
\end{theorem}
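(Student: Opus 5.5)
The plan is to move the problem to the whole time line, where the spaces $\calB^\theta_2$ of Section~\ref{sec:Stuff} are available, and then apply the real-interpolation part of Lemma~\ref{lem:Shneiberg} to the space-time operator $\mathcal{T} \coloneqq \diff_t + \calL_A$, with $\sigma = s$ in the definition of $\calB^\theta_2$.

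\textbf{Reduction to $\R$.} Since $u_0 = 0$, I extend $F$ by zero outside $(0,T)$ and call the extension $\bar F \in L^2(\R;L^2(\Omega))$. I will solve $\mathcal{T}\bar u = \bar F$ on $Q_\R$. By causality, the restriction of $\bar u$ to $(0,T)$ should be $u$. I plan to check this by testing with $\bar u\,\chi_{(-\infty,t)}$: since $\bar F = 0$ for $t<0$, energy uniqueness gives $\bar u = 0$ for $t<0$. Hence $\bar u|_{(0,T)}$ solves \eqref{eq:NLParabolic} with $u_0 = 0$, and uniqueness identifies it with $u$.

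\textbf{Function spaces and boundedness.} By \eqref{eq:Bt2characterize} and Lemma~\ref{lem:FractLaplace}, for $\theta$ near $\tfrac12$,
\[
  \calB^\theta_2 = W^{\theta,2}(\R;L^2(\Omega)) \cap L^2(\R;\polH^{2s\theta}(\Omega)),
\]
where $\polH^{2s\theta}(\Omega) = \widetilde{W}^{2s\theta,2}(\Omega)$. I set $X_\theta \coloneqq \calB^\theta_2$ and $Y_\theta \coloneqq (\calB^{1-\theta}_2)'$. Concretely, $Y_\theta = W^{\theta-1,2}(\R;L^2(\Omega)) + L^2(\R;W^{-2s(1-\theta),2}(\Omega))$.

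I fix $\theta_0 < \tfrac12 < \theta_1$ close to $\tfrac12$ and work with the couples $(X_{\theta_0},X_{\theta_1})$ and $(Y_{\theta_0},Y_{\theta_1})$. Reiteration \eqref{eq:ReiterationII} shows that the $X$ scale is a real $(\cdot,\cdot)_{\lambda,2}$ scale. The $Y$ scale is one as well, by the duality theorem for real interpolation of Hilbert couples, since $(\cdot,\cdot)_{\lambda,2}$ commutes with duals.

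Boundedness of $\mathcal{T}: X_\theta \to Y_\theta$ splits into two parts:
\begin{enumerate}[$\bullet$]
  \item $\diff_t$ maps $W^{\theta,2}(\R;L^2(\Omega))$ into $W^{\theta-1,2}(\R;L^2(\Omega))$, which is the dual of $W^{1-\theta,2}(\R;L^2(\Omega))$. I will verify this with the Fourier transform in $t$.
  \item $\calL_A$ maps $\widetilde{W}^{2s\theta,2}(\Omega)$ into $W^{-2s(1-\theta),2}(\Omega)$, pointwise in $t$. This is exactly the estimate in the proof of Theorem~\ref{thm:OurResult} with exponents $2s\theta + 2s(1-\theta) = 2s$, provided both lie in $(0,1)$.
\end{enumerate}

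\textbf{Invertibility at $\theta = \tfrac12$ (main obstacle).} Here
\[
  X_{1/2} = W^{1/2,2}(\R;L^2(\Omega)) \cap L^2(\R;\widetilde{W}^{s,2}(\Omega)),
\]
and $Y_{1/2} = X_{1/2}'$. There is no coercivity, because $\langle \diff_t u, u\rangle = 0$. I would instead prove an inf-sup condition using the Hilbert transform $\mathcal{H}$ in time, which commutes with $\calL_A$ because $A$ is independent of $t$. Testing with $v = u + \mathcal{H}u$ gives two contributions:
\begin{enumerate}[$\bullet$]
  \item $\langle \diff_t u, \mathcal{H}u\rangle = \int |\tau|\,\|\hat u(\tau)\|^2_{L^2(\Omega)}\,\diff\tau \simeq |u|^2_{W^{1/2,2}}$;
  \item $\langle \calL_A u, u\rangle \ge \lambda |u|^2_{L^2(\widetilde{W}^{s,2})}$.
\end{enumerate}
The cross term $\langle \calL_A u, \mathcal{H}u\rangle$ vanishes because $\calL_A$ is symmetric and $\mathcal{H}$ is skew-adjoint. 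The term $\langle \diff_t u,u\rangle$ is also zero. This yields $\langle \mathcal{T}u, u + \mathcal{H}u\rangle \gtrsim \|u\|^2_{X_{1/2}}$ (up to handling the $L^2$ part via Poincaré in $x$), with $\|v\|_{X_{1/2}} \lesssim \|u\|_{X_{1/2}}$. The same argument applied to the adjoint $-\diff_t + \calL_A$ gives surjectivity, and Banach–Nečas–Babuška then gives $\mathcal{T}^{-1} \in \mathcal{L}(Y_{1/2}, X_{1/2})$. This is the step I expect to require the most care.

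\textbf{Conclusion.} Lemma~\ref{lem:Shneiberg} (real case, $q = 2$) gives $\delta' > 0$ such that $\mathcal{T}^{-1}: Y_\theta \to X_\theta$ is bounded for $|\theta - \tfrac12| < \delta'$. I take $\theta = \tfrac12 + \delta'/2$.

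For this $\theta$, $\bar F \in L^2(\R;L^2(\Omega)) \subset Y_\theta$, because $L^2(\R;L^2(\Omega))$ pairs continuously with $\calB^{1-\theta}_2 \subset L^2(\R;L^2(\Omega))$. Hence
\[
  \bar u \in W^{\theta,2}(\R;L^2(\Omega)) \cap L^2(\R;\widetilde{W}^{2s\theta,2}(\Omega)).
\]
Restricting to $(0,T)$ gives \eqref{eq:HigherIntNLParabolic} with $\delta = \min\{\theta - \tfrac12,\ 2s(\theta - \tfrac12)\}$.

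For \eqref{eq:BetterTimeDerivNLParabolic}, I write $\diff_t u = F - \calL_A u$. The boundedness of $\calL_A$ from the second step puts $\calL_A u$ in $L^2(0,T;W^{-2s(1-\theta),2}(\Omega))$, and $F \in L^2(0,T;L^2(\Omega))$ embeds into the same space. Since $-2s(1-\theta) = -s + \nu$ with $\nu = 2s(\theta - \tfrac12) > 0$, this gives $u \in W^{1,2}(0,T;W^{-s+\nu,2}(\Omega))$.
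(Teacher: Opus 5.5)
Your proposal is correct and follows the paper's route: extend by zero to $Q_\R$, use well-posedness in $\calB^{1/2}_2$ (which the paper cites from Kaplan and you prove via the Hilbert-transform inf-sup argument), apply the real-interpolation Shneiberg lemma on the $\calB^\theta_2$ scale, place the $L^2$ data in the dual scale, and read off $\diff_t u$ from the equation. Where you differ, you are more careful than the paper: your target scale $Y_\theta=(\calB^{1-\theta}_2)'$ is the pairing that makes $\diff_t+\calL_A$ bounded at both ends of a couple straddling $\tfrac12$, whereas the paper's $(\calB^\theta_2)'$ fails this for $\theta<\tfrac12$ because of $\diff_t$. You also spell out the causality step that the paper leaves implicit, and you track correctly the factor $2s$ in $\delta=\min\{\theta-\tfrac12,2s(\theta-\tfrac12)\}$.
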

\begin{proof}
  We draw inspiration from \cite[Section 6]{MR3907738} and extend the problem, via the Fourier transform, to all of $\R$. This is why we require the initial condition to be zero. Once this extension is carried out, as originally described in \cite{MR200593}, the problem is well-posed in
  \begin{align*}
    \calB_2^{1/2} &= W^{1/2,2}(\R;L^2(\Omega)) \cap L^2(\R; \polH^{s}(\Omega))
    \\
      &= W^{1/2,2}(\R;L^2(\Omega)) \cap L^2(\R; \widetilde{W}^{s,2}(\Omega)),
  \end{align*}
  where we use the characterization of Lemma~\ref{lem:FractLaplace}.
  
  Over the family $\{\calB_2^\lambda\}_{\lambda \in (0,1)}$ define the operator
  \[
    \wp : \calB^\lambda_2 \to \left( \calB^\lambda_2 \right)', \qquad w \mapsto \diff_t w + \calL_A w.
  \]
  As in \cite[Lemma 6.7]{MR3907738} we see that this operator is well-defined, linear, and bounded for all $\lambda \in (0,1)$. Thus, the previous considerations and Lemma~\ref{lem:Shneiberg} imply that there is $\delta >0$ such that if
  \[
    \left|\mu - \frac12 \right| \leq \delta
  \]
  then $\wp^{-1}: (\calB_2^\mu)' \to \calB_2^\mu$ boundedly. Thus, the parabolic problem is well-posed in
  \begin{align*}
    \calB_2^{1/2+\delta} &= W^{1/2+\delta,2}(\R; L^2(\Omega)) \cap L^2( \R ; \polH^{s+\delta}(\Omega))
    \\
      &= W^{1/2+\delta,2}(\R; L^2(\Omega)) \cap L^2( \R ; \widetilde{W}^{s+\delta,2}(\Omega)),
  \end{align*}
  where we, again, used Lemma~\ref{lem:FractLaplace}. Now, a simple zero extension shows that
  \[
    L^2(0,T;L^2(\Omega)) \subset L^2(\R;L^2(\Omega)) \subset (\calB_2^{1/2+\delta})'.
  \]
  Thus, if $F \in L^2(0,T;L^2(\Omega))$, the solution to \eqref{eq:NLParabolic}, by restriction, satisfies \eqref{eq:HigherIntNLParabolic}.

  Finally, using the equation we have, for $\varphi \in C_0^\infty(Q_T)$ and $\nu \in (0,\delta]$,
  \begin{align*}
    \left| \int_0^T \langle u, \diff_t \varphi \rangle \diff t \right| &\lesssim \int_0^T \| F \|_{L^2(\Omega)} \| \varphi \|_{L^2(\Omega)} \diff t + \int_0^T |u|_{\widetilde{W}^{s+\nu,2}(\Omega)} |\varphi|_{\widetilde{W}^{s-\nu,2}(\Omega)} \diff t
    \\
    &\lesssim \left( \| F \|_{L^2(Q_T)} + \| u \|_{L^2(0,T;\widetilde{W}^{s+\nu,2}(\Omega))} \right) 
      \| \varphi \|_{L^2(0,T;\widetilde{W}^{s-\nu,2}(\Omega))}
  \end{align*}
  where we used the mapping properties of $\calL_A$, which were discussed during the course of the proof of Theorem~\ref{thm:OurResult} and \eqref{eq:HigherIntNLParabolic}. By density, this estimate implies \eqref{eq:BetterTimeDerivNLParabolic}.
\end{proof}

\begin{remark}[nonzero initial data]
  The previous result assumes that $u_0=0$. By linearity we may rewrite the problem as $u = \widetilde{u} + u_0$. Here $\widetilde{u}$ solves \eqref{eq:NLParabolic} with $\widetilde{u}_{|t=0} =0$ and $f -\calL_A u_0$ as right hand side. Smoothness assumptions on $\calL_A u_0$ then can imply a higher integrability and differentiability result.
\end{remark}

\begin{remark}[Lipschitz domains]
  In the case that $\Omega$ is neither convex nor smooth the identification given in Lemma~\ref{lem:FractLaplace} is not possible. The reason for this is that, for $w \in W^{1,2}_0(\Omega)$, $\Delta w \in L^2(\Omega)$ does not imply $w \in W^{2,2}(\Omega) \cap W^{1,2}_0(\Omega)$ anymore. According to \cite[Theorem B.2]{MR1331981} in general Lipschitz domains one may still conclude $w \in W^{3/2-\epsilon,2}(\Omega) \cap W^{1,2}_0(\Omega)$. Thus, one may restrict the value of $\sigma$ in Lemma~\ref{lem:FractLaplace} and, for that restricted range, obtain an analogue of Theorem~\ref{thm:NLParabolic}.
\end{remark}

\subsection{Semi-linear equations}
As in the elliptic case, we can use the result for the linear case to obtain regularity results for a class of semi-linear nonlocal problems 
\begin{equation}
\label{eq:NLSLParabolic}
  \begin{dcases}
    \diff_t u + \calL_A u = F(t, x, u), & x \in \Omega, \ t \in (0,T),
    \\
    u(x,t) = 0, & x \in \R^d \setminus \Omega, \ t \in [0,T), 
    \\
    u(x,0)= u_0(x,0), & x \in \Omega. 
  \end{dcases}
\end{equation}
Assume that the nonlinearity $F : (0,T) \times \Omega \times \R \ni (t, x, z) \mapsto F(t,x,z) \in \R$ satisfies the following conditions:  
\begin{itemize}
  \item[(P1)] (Carath\'eodory) For all $z \in \R$ the mapping $(0,T) \times \Omega \ni (t, x) \mapsto F(t, x, z) \in \R$ is measurable, and $\R \ni z \mapsto F(t, x, u) \in \R$ is continuous for \mae $(t, x) \in (0, T) \times \Omega$.
  
  \item[(P2)] (Lipschitz growth) There exists a constant $\kappa \ge 0$ such that for \mae $(t, x) \in (0,T) \times \Omega$ and all $z, \zeta \in \R$:
  \begin{equation*}
    \left( F(t, x, z) - F(t, x, \zeta) \right)(z - \zeta) \le \kappa |z - \zeta|^2.
  \end{equation*}
  
  \item[(P3)] (Growth) There exists
  \[
    p \in \begin{dcases}
            \left(1 , \frac{4s}{d} \right), & d > 2s,
            \\
            (1,\infty), & d \leq 2s,
          \end{dcases}
  \]
  $g_1 \in L^{p'}(Q_T)$, and $C_2>0$ such that
  \begin{equation*}\label{eq:strict_growth1}
      |F(t, x, z)| \le g_2(t, x) + C_2|z|^p, \qquad \forall z\in \R.
  \end{equation*}
\end{itemize}
Assume, in addition, that the initial condition verifies $u_{0}\in L^{2}(\Omega)$. In this case, the existence of a unique solution $u$ to \eqref{eq:NLSLParabolic}  that belongs to 
\begin{equation*}
 L^\infty(0, T; L^2(\Omega)) \cap L^2(0, T; \widetilde{W}^{s,2}(\Omega)),
\end{equation*}
follows from the pseudo-monotone operator framework presented, for instance, in \cite[Chapter 8]{MR3014456}. Moreover, if $F$ satisfies the restricted growth assumption: 
\begin{equation*}\label{eq:strict_growth2}
    |F(t, x, z)| \le g_2(t, x) + C_2|z|^\rho,
\end{equation*}
where $g_2 \in L^2(0,T; L^2(\Omega))$ and $1 \le \rho \le 1 + \frac{2s}{d}$ (if $d > 2s$), then the composite source term $\widetilde{f}$, defined as,
\begin{equation*}
    \widetilde{f}(t, x)= F(t, x, u(x,t)),
\end{equation*}
is such that $\widetilde{f} \in L^2(0, T; L^2(\Omega))$. Thus, if the initial data is more regular, $u_0 \in \widetilde{W}^{s,2}(\Omega)$, by Proposition \ref{prop:smoothdata} and uniqueness of solutions we can deduce the improved solution regularity:
\begin{equation}
    u \in L^\infty(0, T; \widetilde{W}^{s,2}(\Omega)) \cap C([0,T]; L^2(\Omega)), \qquad \qquad \diff_tu  \in L^2(0, T; L^2(\Omega)).
\end{equation}
If $u_0 = 0, $ we may apply Theorem \ref{thm:NLParabolic}. to deduce the existence of $\delta >0$ so that the solution to the semi-linear problem \eqref{eq:NLSLParabolic} belongs to 
\[
L^2( 0,T; \widetilde{W}^{s+\delta,2}(\Omega)) \cap W^{1/2+\delta}(0,T;L^2(\Omega)).  
\]

\section*{Funding}

TM has been partially supported by NSF grant DMS-2509059. AJS has been partially supported by NSF grant DMS-2409918.

\section*{Disclosure statement}

All the authors declare that they have absolutely zero competing interests.

\section*{Generative AI use}

No generative AI was used to complete any part of this work.

\section*{Data availability statement}

No data was used to produce this article.

\bibliographystyle{amsplain}
\bibliography{./biblio}

@article{ERHARDT20161772,
title = {Higher integrability for solutions to parabolic problems with irregular obstacles and nonstandard growth},
journal = {Journal of Mathematical Analysis and Applications},
volume = {435},
number = {2},
pages = {1772-1803},
year = {2016},
issn = {0022-247X},
doi = {https://doi.org/10.1016/j.jmaa.2015.11.028},
url = {https://www.sciencedirect.com/science/article/pii/S0022247X15010653},
author = {Andr\'e H. Erhardt}
}

@article{Bogelein2011,
  author  = {B\"ogelein, V. and Duzaar, F.},
  title   = {Higher integrability for parabolic systems with non-standard growth and degenerate diffusions},
  journal = {Publicacions Matem\'atiques},
  year    = {2011},
  volume  = {55},
  pages   = {201--250},
  doi     = {10.5565/publmat_55111_10}
}

@article{Kinnunen2000,
  author  = {Kinnunen, Juha and Lewis, John L.},
  title   = {Higher integrability for parabolic systems of p-{L}aplacian type},
  journal = {Duke Mathematical Journal},
  year    = {2000},
  volume  = {102},
  doi     = {10.1215/s0012-7094-00-10223-2}
}

@article{DiNezza2012,
  author  = {Di Nezza, E. and Palatucci, G. and Valdinoci, E.},
  title   = {Hitchhiker's guide to the fractional {S}obolev spaces},
  journal = {Bulletin des Sciences Math{\'e}matiques},
  volume  = {136},
  number  = {5},
  pages   = {521--573},
  year    = {2012}
}

@article{Servadei2012,
  author  = {Servadei, R. and Valdinoci, E.},
  title   = {Mountain pass solutions for non-local elliptic operators},
  journal = {Journal of Mathematical Analysis and Applications},
  volume  = {389},
  number  = {2},
  pages   = {887--898},
  year    = {2012}
}

@article {MR417568,
    AUTHOR = {Meyers, Norman G. and Elcrat, Alan},
     TITLE = {Some results on regularity for solutions of non-linear
              elliptic systems and quasi-regular functions},
   JOURNAL = {Duke Math. J.},
  FJOURNAL = {Duke Mathematical Journal},
    VOLUME = {42},
      YEAR = {1975},
     PAGES = {121--136},
      ISSN = {0012-7094,1547-7398},
   MRCLASS = {35J35},
  MRNUMBER = {417568},
MRREVIEWER = {L.\ A.\ Peletier},
       URL = {http://projecteuclid.org/euclid.dmj/1077310903},
}

@article {MR1607245,
    AUTHOR = {Zhikov, V. V.},
     TITLE = {Meyer-type estimates for solving the nonlinear {S}tokes
              system},
   JOURNAL = {Differ. Uravn.},
  FJOURNAL = {Differentsial\cprime nye Uravneniya},
    VOLUME = {33},
      YEAR = {1997},
    NUMBER = {1},
     PAGES = {107--114, 143},
      ISSN = {0374-0641},
   MRCLASS = {35Q30 (35J60)},
  MRNUMBER = {1607245},
MRREVIEWER = {Alexander\ Yurjevich\ Chebotarev},
}

@article {MR3907738,
    AUTHOR = {Auscher, Pascal and Bortz, Simon and Egert, Moritz and Saari,
              Olli},
     TITLE = {Nonlocal self-improving properties: a functional analytic
              approach},
   JOURNAL = {Tunis. J. Math.},
  FJOURNAL = {Tunisian Journal of Mathematics},
    VOLUME = {1},
      YEAR = {2019},
    NUMBER = {2},
     PAGES = {151--183},
      ISSN = {2576-7658,2576-7666},
   MRCLASS = {35R11 (26A33 35D30 35K90 46B70)},
  MRNUMBER = {3907738},
       DOI = {10.2140/tunis.2019.1.151},
       URL = {https://doi.org/10.2140/tunis.2019.1.151},
}

@article {MR634681,
    AUTHOR = {\v{S}ne\u{\i}berg, I. Ja.}, 
     TITLE = {Spectral properties of linear operators in interpolation
              families of {B}anach spaces},
   JOURNAL = {Mat. Issled.},
  FJOURNAL = {Akademiya Nauk Moldavsko\u i\ SSR. Institut Matematiki s
              Vychislitel\cprime nym Tsentrom. Matematicheskie
              Issledovaniya},
    NUMBER = {2(32)},
    VOLUME = {9},
      YEAR = {1974},
     PAGES = {214--229, 254--255},
      ISSN = {0542-9994},
   MRCLASS = {47A10 (46M35 47B30)},
  MRNUMBER = {634681},
MRREVIEWER = {E.\ Gerlach},
}

@article {MR569253,
    AUTHOR = {Zafran, Misha},
     TITLE = {Spectral theory and interpolation of operators},
   JOURNAL = {J. Functional Analysis},
  FJOURNAL = {Journal of Functional Analysis},
    VOLUME = {36},
      YEAR = {1980},
    NUMBER = {2},
     PAGES = {185--204},
      ISSN = {0022-1236},
   MRCLASS = {47A10 (46M35)},
  MRNUMBER = {569253},
MRREVIEWER = {E.\ Gerlach},
       DOI = {10.1016/0022-1236(80)90099-3},
       URL = {https://doi.org/10.1016/0022-1236(80)90099-3},
}

@article {MR3336922,
    AUTHOR = {Kuusi, Tuomo and Mingione, Giuseppe and Sire, Yannick},
     TITLE = {Nonlocal self-improving properties},
   JOURNAL = {Anal. PDE},
  FJOURNAL = {Analysis \& PDE},
    VOLUME = {8},
      YEAR = {2015},
    NUMBER = {1},
     PAGES = {57--114},
      ISSN = {2157-5045,1948-206X},
   MRCLASS = {35R11 (35B65 35D30 45K05)},
  MRNUMBER = {3336922},
MRREVIEWER = {Pablo\ Ra\'ul\ Stinga},
       DOI = {10.2140/apde.2015.8.57},
       URL = {https://doi.org/10.2140/apde.2015.8.57},
}

@article {Mengesha2025,
    AUTHOR = {Mengesha, Tadele and Salgado, Abner J. and Siktar, Joshua M.},
     TITLE = {Asymptotic compatibility of parametrized optimal design
              problems},
   JOURNAL = {ESAIM Math. Model. Numer. Anal.},
  FJOURNAL = {ESAIM. Mathematical Modelling and Numerical Analysis},
    VOLUME = {59},
      YEAR = {2025},
    NUMBER = {6},
     PAGES = {3069--3105},
      ISSN = {2822-7840,2804-7214},
   MRCLASS = {49M41 (49M25 65R20 74P05)},
  MRNUMBER = {4989257},
MRREVIEWER = {Asatur\ Zh.\ Khurshudyan},
       DOI = {10.1051/m2an/2025084},
       URL = {https://doi.org/10.1051/m2an/2025084},
}

@book {MR482275,
    AUTHOR = {Bergh, J\"oran and L\"ofstr\"om, J\"orgen},
     TITLE = {Interpolation spaces. {A}n introduction},
    SERIES = {Grundlehren der Mathematischen Wissenschaften},
    VOLUME = {No. 223},
 PUBLISHER = {Springer-Verlag, Berlin-New York},
      YEAR = {1976},
     PAGES = {x+207},
   MRCLASS = {46M35},
  MRNUMBER = {482275},
}

@book {MR3753604,
    AUTHOR = {Lunardi, Alessandra},
     TITLE = {Interpolation theory},
    SERIES = {Appunti. Scuola Normale Superiore di Pisa (Nuova Serie)
              [Lecture Notes. Scuola Normale Superiore di Pisa (New
              Series)]},
    VOLUME = {16},
   EDITION = {Third},
 PUBLISHER = {Edizioni della Normale, Pisa},
      YEAR = {2018},
     PAGES = {xiv+199},
      ISBN = {978-88-7642-639-1; 978-88-7642-638-4},
   MRCLASS = {46M35 (46-02 46B70 47D06 47F05)},
  MRNUMBER = {3753604},
       DOI = {10.1007/978-88-7642-638-4},
       URL = {https://doi.org/10.1007/978-88-7642-638-4},
}

@article {MR358326,
    AUTHOR = {Cwikel, Michael},
     TITLE = {On {$(L\sp{po}(A\sb{o}),\,\ L\sp{p\sb{1}}(A\sb{1}))\sb{\theta
              },\,\sb{q}$}},
   JOURNAL = {Proc. Amer. Math. Soc.},
  FJOURNAL = {Proceedings of the American Mathematical Society},
    VOLUME = {44},
      YEAR = {1974},
     PAGES = {286--292},
      ISSN = {0002-9939,1088-6826},
   MRCLASS = {46E30},
  MRNUMBER = {358326},
MRREVIEWER = {Jorgen\ Lofstrom},
       DOI = {10.2307/2040423},
       URL = {https://doi.org/10.2307/2040423},
}

@article {MR159110,
    AUTHOR = {Meyers, Norman G.},
     TITLE = {An {$L\sp{p}$}-estimate for the gradient of solutions of
              second order elliptic divergence equations},
   JOURNAL = {Ann. Scuola Norm. Sup. Pisa Cl. Sci. (3)},
  FJOURNAL = {Annali della Scuola Normale Superiore di Pisa. Classe di
              Scienze. Serie III},
    VOLUME = {17},
      YEAR = {1963},
     PAGES = {189--206},
      ISSN = {0391-173X},
   MRCLASS = {35.42},
  MRNUMBER = {159110},
MRREVIEWER = {Lipman\ Bers},
}

@article {MR1951822,
    AUTHOR = {Triebel, Hans},
     TITLE = {Function spaces in {L}ipschitz domains and on {L}ipschitz
              manifolds. {C}haracteristic functions as pointwise
              multipliers},
   JOURNAL = {Rev. Mat. Complut.},
  FJOURNAL = {Revista Matem\'atica Complutense},
    VOLUME = {15},
      YEAR = {2002},
    NUMBER = {2},
     PAGES = {475--524},
      ISSN = {1139-1138,1988-2807},
   MRCLASS = {46E35 (58D15)},
  MRNUMBER = {1951822},
MRREVIEWER = {Gilles\ Carron},
       DOI = {10.5209/rev\_REMA.2002.v15.n2.16910},
       URL = {https://doi.org/10.5209/rev_REMA.2002.v15.n2.16910},
}

@article {MR430814,
    AUTHOR = {Schmeisser, Hans-J\"urgen and Triebel, Hans},
     TITLE = {Anisotropic spaces. {I}. {I}nterpolation of abstract spaces
              and function spaces},
   JOURNAL = {Math. Nachr.},
  FJOURNAL = {Mathematische Nachrichten},
    VOLUME = {73},
      YEAR = {1976},
     PAGES = {107--123},
      ISSN = {0025-584X,1522-2616},
   MRCLASS = {46M35 (46E35)},
  MRNUMBER = {430814},
MRREVIEWER = {J\"oran\ Bergh},
       DOI = {10.1002/mana.19760730108},
       URL = {https://doi.org/10.1002/mana.19760730108},
}

@book {MR503903,
    AUTHOR = {Triebel, Hans},
     TITLE = {Interpolation theory, function spaces, differential operators},
    SERIES = {North-Holland Mathematical Library},
    VOLUME = {18},
 PUBLISHER = {North-Holland Publishing Co., Amsterdam-New York},
      YEAR = {1978},
     PAGES = {528},
      ISBN = {0-7204-0710-9},
   MRCLASS = {46E35 (35Jxx 46M35)},
  MRNUMBER = {503903},
MRREVIEWER = {Robert\ D.\ Brown},
}

@book {MR3930629,
    AUTHOR = {Amann, Herbert},
     TITLE = {Linear and quasilinear parabolic problems. {V}ol. {II}},
    SERIES = {Monographs in Mathematics},
    VOLUME = {106},
      NOTE = {Function spaces},
 PUBLISHER = {Birkh\"auser/Springer, Cham},
      YEAR = {2019},
     PAGES = {xiv+464},
      ISBN = {978-3-030-11762-7; 978-3-030-11763-4},
   MRCLASS = {46-02 (35Kxx 42B35 46E35 46E40 46F05 46M35 46N20)},
  MRNUMBER = {3930629},
MRREVIEWER = {Jos\'e\ Luis\ Ansorena},
       DOI = {10.1007/978-3-030-11763-4},
       URL = {https://doi.org/10.1007/978-3-030-11763-4},
}

@book {MR1156075,
    AUTHOR = {Dautray, Robert and Lions, Jacques-Louis},
     TITLE = {Mathematical analysis and numerical methods for science and
              technology. {V}ol. 5},
      NOTE = {Evolution problems. I,
              With the collaboration of Michel Artola, Michel Cessenat and
              H\'el\`ene Lanchon,
              Translated from the French by Alan Craig},
 PUBLISHER = {Springer-Verlag, Berlin},
      YEAR = {1992},
     PAGES = {xiv+709},
      ISBN = {3-540-50205-X; 3-540-66101-8},
   MRCLASS = {00A05 (35-01 47-01)},
  MRNUMBER = {1156075},
       DOI = {10.1007/978-3-642-58090-1},
       URL = {https://doi.org/10.1007/978-3-642-58090-1},
}

@article {MR200593,
    AUTHOR = {Kaplan, Stanley},
     TITLE = {Abstract boundary value problems for linear parabolic
              equations},
   JOURNAL = {Ann. Scuola Norm. Sup. Pisa Cl. Sci. (3)},
  FJOURNAL = {Annali della Scuola Normale Superiore di Pisa. Classe di
              Scienze. Serie III},
    VOLUME = {20},
      YEAR = {1966},
     PAGES = {395--419},
      ISSN = {0391-173X},
   MRCLASS = {35.65},
  MRNUMBER = {200593},
MRREVIEWER = {O.\ Hor\'a\v cek},
}

@article {MR4669309,
    AUTHOR = {Baasandorj, Sumiya and Byun, Sun-Sig and Kim, Wontae},
     TITLE = {Self-improving properties of very weak solutions to double
              phase systems},
   JOURNAL = {Trans. Amer. Math. Soc.},
  FJOURNAL = {Transactions of the American Mathematical Society},
    VOLUME = {376},
      YEAR = {2023},
    NUMBER = {12},
     PAGES = {8733--8768},
      ISSN = {0002-9947,1088-6850},
   MRCLASS = {35D30 (35J60 35J70)},
  MRNUMBER = {4669309},
MRREVIEWER = {Seunghyeok\ Kim},
       DOI = {10.1090/tran/9039},
       URL = {https://doi.org/10.1090/tran/9039},
}

@article {MR4507377,
    AUTHOR = {Scott, James M. and Mengesha, Tadele},
     TITLE = {Self-improving inequalities for bounded weak solutions to
              nonlocal double phase equations},
   JOURNAL = {Commun. Pure Appl. Anal.},
  FJOURNAL = {Communications on Pure and Applied Analysis},
    VOLUME = {21},
      YEAR = {2022},
    NUMBER = {1},
     PAGES = {183--212},
      ISSN = {1534-0392,1553-5258},
   MRCLASS = {35B65 (35J70 42B37 45K05)},
  MRNUMBER = {4507377},
       DOI = {10.3934/cpaa.2021174},
       URL = {https://doi.org/10.3934/cpaa.2021174},
}

@article {MR4019094,
    AUTHOR = {Gianazza, Ugo and Schwarzacher, Sebastian},
     TITLE = {Self-improving property of the fast diffusion equation},
   JOURNAL = {J. Funct. Anal.},
  FJOURNAL = {Journal of Functional Analysis},
    VOLUME = {277},
      YEAR = {2019},
    NUMBER = {12},
     PAGES = {108291, 57},
      ISSN = {0022-1236,1096-0783},
   MRCLASS = {35K59 (35B65 35K67)},
  MRNUMBER = {4019094},
MRREVIEWER = {Andrei\ Tarfulea},
       DOI = {10.1016/j.jfa.2019.108291},
       URL = {https://doi.org/10.1016/j.jfa.2019.108291},
}

@article {MR4149740,
    AUTHOR = {Li, Qifan},
     TITLE = {Higher integrability for obstacle problem related to the
              singular porous medium equation},
   JOURNAL = {Bound. Value Probl.},
  FJOURNAL = {Boundary Value Problems},
      YEAR = {2020},
     PAGES = {Paper No. 147, 36},
      ISSN = {1687-2762,1687-2770},
   MRCLASS = {35K59 (35B45 35B65 35K57 35K65)},
  MRNUMBER = {4149740},
       DOI = {10.1186/s13661-020-01445-x},
       URL = {https://doi.org/10.1186/s13661-020-01445-x},
}

@article {MR1282226,
    AUTHOR = {Shi, Peter and Wright, Steve},
     TITLE = {Higher integrability of the gradient in linear elasticity},
   JOURNAL = {Math. Ann.},
  FJOURNAL = {Mathematische Annalen},
    VOLUME = {299},
      YEAR = {1994},
    NUMBER = {3},
     PAGES = {435--448},
      ISSN = {0025-5831,1432-1807},
   MRCLASS = {35B65 (35J45 35Q72 73C02 73C35)},
  MRNUMBER = {1282226},
MRREVIEWER = {Joseph\ J.\ Roseman},
       DOI = {10.1007/BF01459793},
       URL = {https://doi.org/10.1007/BF01459793},
}

@article {Mengesha-Scott-FKorn,
    AUTHOR = {Mengesha, Tadele and Scott, James M.},
     TITLE = {A fractional {K}orn-type inequality for smooth domains and a
              regularity estimate for nonlinear nonlocal systems of
              equations},
   JOURNAL = {Commun. Math. Sci.},
  FJOURNAL = {Communications in Mathematical Sciences},
    VOLUME = {20},
      YEAR = {2022},
    NUMBER = {2},
     PAGES = {405--423},
      ISSN = {1539-6746,1945-0796},
   MRCLASS = {46E35 (35R11 46E40)},
  MRNUMBER = {4374291},
       DOI = {10.4310/CMS.2022.v20.n2.a5},
       URL = {https://doi.org/10.4310/CMS.2022.v20.n2.a5},
}

@book {MR1345385,
    AUTHOR = {Amann, Herbert},
     TITLE = {Linear and quasilinear parabolic problems. {V}ol. {I}},
    SERIES = {Monographs in Mathematics},
    VOLUME = {89},
      NOTE = {Abstract linear theory},
 PUBLISHER = {Birkh\"auser Boston, Inc., Boston, MA},
      YEAR = {1995},
     PAGES = {xxxvi+335},
      ISBN = {3-7643-5114-4},
   MRCLASS = {34Gxx (35Kxx 46M35 46N20 47D06 47N20)},
  MRNUMBER = {1345385},
MRREVIEWER = {Paolo\ Acquistapace},
       DOI = {10.1007/978-3-0348-9221-6},
       URL = {https://doi.org/10.1007/978-3-0348-9221-6},
}

@article {MR216336,
    AUTHOR = {Fujiwara, Daisuke},
     TITLE = {Concrete characterization of the domains of fractional powers
              of some elliptic differential operators of the second order},
   JOURNAL = {Proc. Japan Acad.},
  FJOURNAL = {Proceedings of the Japan Academy},
    VOLUME = {43},
      YEAR = {1967},
     PAGES = {82--86},
      ISSN = {0021-4280},
   MRCLASS = {47.65},
  MRNUMBER = {216336},
MRREVIEWER = {K.\ Gustafson},
       URL = {http://projecteuclid.org/euclid.pja/1195521686},
}

@article {MR3356020,
    AUTHOR = {Bonito, Andrea and Pasciak, Joseph E.},
     TITLE = {Numerical approximation of fractional powers of elliptic
              operators},
   JOURNAL = {Math. Comp.},
  FJOURNAL = {Mathematics of Computation},
    VOLUME = {84},
      YEAR = {2015},
    NUMBER = {295},
     PAGES = {2083--2110},
      ISSN = {0025-5718,1088-6842},
   MRCLASS = {65N30 (65R20)},
  MRNUMBER = {3356020},
MRREVIEWER = {Igor\ Bock},
       DOI = {10.1090/S0025-5718-2015-02937-8},
       URL = {https://doi.org/10.1090/S0025-5718-2015-02937-8},
}

@article {MR1331981,
    AUTHOR = {Jerison, David and Kenig, Carlos E.},
     TITLE = {The inhomogeneous {D}irichlet problem in {L}ipschitz domains},
   JOURNAL = {J. Funct. Anal.},
  FJOURNAL = {Journal of Functional Analysis},
    VOLUME = {130},
      YEAR = {1995},
    NUMBER = {1},
     PAGES = {161--219},
      ISSN = {0022-1236,1096-0783},
   MRCLASS = {35J25 (46E35)},
  MRNUMBER = {1331981},
MRREVIEWER = {H.\ Triebel},
       DOI = {10.1006/jfan.1995.1067},
       URL = {https://doi.org/10.1006/jfan.1995.1067},
}

@book {MR3014456,
    AUTHOR = {Roub\'i{\v c}ek, Tom\'a{\v s}},
     TITLE = {Nonlinear partial differential equations with applications},
    SERIES = {International Series of Numerical Mathematics},
    VOLUME = {153},
   EDITION = {Second},
 PUBLISHER = {Birkh\"auser/Springer Basel AG, Basel},
      YEAR = {2013},
     PAGES = {xx+476},
      ISBN = {978-3-0348-0512-4; 978-3-0348-0513-1},
   MRCLASS = {35-02 (35J60 35K55 35Q30)},
  MRNUMBER = {3014456},
       DOI = {10.1007/978-3-0348-0513-1},
       URL = {https://doi.org/10.1007/978-3-0348-0513-1},
}

\end{document}